%New Spacetime Intrinsic Flat Convergence 
%Sakovich and Sormani 

\documentclass[11pt]{amsart}%[CM,GP]{degruyter-crelle}
\usepackage{amsmath, amsthm, amssymb}
\usepackage{graphicx}
\usepackage{tkz-euclide}
\usepackage{mathbbol}
\usepackage{txfonts}
\usepackage{color}

%\firstpage{}\volume{}\volumeyear{}\copyrightyear{}\doiyear{}\doi{}
\title[The Null distance encodes causality]{The Null distance encodes causality}

\author[Sakovich]{A Sakovich}
\author[Sormani]{C Sormani}

\thanks{The authors began this research project while in residence at the Mathematical Sciences Research Institute (MSRI) funded by NSF Grant No. 0932078000. The research was also funded in part by Sormani's NSF grant DMS-1612409 and PSC-CUNY funding.  Sakovich was partly funded by the Swedish Research Council dnr. 2016-04511}

\newtheorem{thm}{Theorem}[section]
\newcommand{\bt}{\begin{thm}}
\newcommand{\et}{\end{thm}}

\newtheorem{cor}[thm]{Corollary}   %remember switch all {coro} to {cor}

\newcommand{\bc}{\begin{cor}}
\newcommand{\ec}{\end{cor}}

\newtheorem{lem}[thm]{Lemma}   %remember to switch all {lemma} to {lem}

\newcommand{\bl}{\begin{lem}}
\newcommand{\el}{\end{lem}}

\newtheorem{prop}[thm]{Proposition}
\newcommand{\bp}{\begin{prop}}
\newcommand{\ep}{\end{prop}}

\newtheorem{defn}[thm]{Definition}

\newcommand{\ben}{\begin{itemize}}
\newcommand{\een}{\end{itemize}}

\newcommand{\bd}{\begin{defn}}    % This produces an error????    
\newcommand{\ed}{\end{defn}}

\newtheorem{rmrk}[thm]{Remark}   

\newcommand{\br}{\begin{rmrk}}
\newcommand{\er}{\end{rmrk}}

\newtheorem{example}[thm]{Example}

\theoremstyle{definition}

\newcommand{\dhat}{\hat{d}}

\newcommand{\be}{\begin{equation}}

\newcommand{\ee}{\end{equation}}

%\parindent=0 cm

%\newcommand{\closure}{\overline}\textcolor{blue}{

%\newcommand{\vol}{\operatorname{vol}}

%\newcommand{\projection}{\operatorname{pr}}

%\newcommand{\nmass}[1]{{\mathbf N}(#1}
%\newcommand{\flnorm}[1]{{\mathbf F}(#1}

        %duality with lipschitz functions%
         %metric current%
      %metric normal current%
    %metric rect. current%  
 %metric int. rect. current%
      %metric integral current%

%\newcommand{\flnorm}[1]{\mathbf F}(#1)}
%\newcommand{\iflnorm}[1]{{\mathcal F}(#1)}

%\newcommand{\bdry}{\partial\hspace{-0.05cm}}

%\newcommand{\on}[1]{|_{#1}}

%\renewcommand{\labelenumi}{\left(\roman{enumi}\right)}
%%%%%%%%%%%%%%%%%%%%%%%%%%%%%%%%%%%%%
%Dan's commands:

%\DeclareMathOperator{\vol}{Vol} 

\def\iff{\Longleftrightarrow}
\def\implies{\Longrightarrow}
%\def\isom{\cong}\end{alignat}

%\DeclareMathOperator{\spt}{spt}

%%%%%%%%%%%%%%%%%%%%%%%%%%%%%%%%%%%%%%

\begin{document}

\maketitle

\begin{abstract}  
A Lorentzian manifold endowed with a time function, $\tau$, can be converted into a metric space using the null distance, $\hat{d}_\tau$, defined by Sormani and Vega.   We show that if the time function is a proper regular cosmological time function as studied by Andersson, Galloway and Howard, and also by Wald and Yip, or if, more generally, it satisfies the anti-Lipschitz condition of Chru\' sciel, Grant and Minguzzi, then the causal structure is encoded by the null distance in the following sense:
$$
\hat{d}_\tau(p,q)=\tau(q)-\tau(p) \iff q \textrm{ lies in the causal future of } p.
$$
As a consequence, %applying the work of Hawking et. al. and ---- ,  
in dimension $n+1$, $n\ge 2$, we prove
that if there is a bijective map between two such spacetimes,
$F: M_1\to M_2$, which preserves the cosmological time function, $  \tau_2(F(p))= \tau_1(p)$ for any $ p \in M_1$, 
and preserves the null distance,
$\hat{d}_{\tau_2}(F(p),F(q))=\hat{d}_{\tau_1}(p,q)$ for any $p,q\in M_1$,
then there is a Lorentzian isometry between them, $F_*g_1=g_2$.
This yields a canonical procedure allowing us to convert such spacetimes into unique metric spaces with causal structures and time functions.   This will be applied in our upcoming work to define Spacetime Intrinsic Flat Convergence.  \end{abstract}

\newpage

\section{Introduction}\label{sec:intro}

This paper is part of a series of papers developing the notion of  Spacetime Intrinsic Flat (SIF) Convergence of Lorentzian Manifolds as suggested by Shing-Tung Yau.   The overarching plan is to convert the Lorentzian manifolds canonically into unique metric spaces and then to take the intrinsic flat limit of these metric spaces \cite{Sormani-Oberwolfach-18}.   
One method of converting 
a Lorentzian manifold $(N,g)$ with a time function, $\tau$, into a metric space is to use the null distance, $\hat{d}_\tau$, developed by Sormani and Vega in \cite{SV-Null}.   This conversion process,
\be \label{conversion-map}
(N,g) \mapsto (N, \hat{d}_\tau, \tau)
\ee
is canonical for a Lorentzian manifold endowed with a regular cosmological time function, $\tau=\tau_{AGH}$, as defined by Andersson, Galloway, and Howard in \cite{AGH}.   

In this paper we prove that the conversion map in (\ref{conversion-map}) is one-to-one from isometry classes of Lorentzian manifolds to time preserving isometry classes of the metric spaces (see Theorem~\ref{Lorentzian-isom}).    In addition, we prove that the causal structure of $N$ is locally encoded by $(N, \hat{d}_\tau, \tau)$ (see Theorem~\ref{encodes-causality-locally}) and is globally encoded for spacetimes $N$ where $\tau$ is also a proper function  (see Theorem~\ref{proper-causality} within).  

Given a Lorentzian manifold, $(N,g)$, Andersson, Galloway and Howard \cite{AGH} have defined the notion of a canonical time function: 
\be\label{tau-AGH}
\tau_{AGH}(p)= \sup \{L_g(C)\,: \, \textrm{ future timelike }C:[0,1]\to N, C(1)=p\}
\ee
where 
\be
L_g(C)=\int_0^1 |g(C'(s),C'(s))|^{1/2}\,ds.
\ee
See also Wald and Yip \cite{Wald-Yip}.
This $\tau_{AGH}$ is usually referred to as the \emph{cosmological time function}.   Ebrahimi  \cite{Ebrahimi-2013} has shown that on a Friedman-Robertson-Walker spacetime $\tau_{AGH}$ may be viewed as the time elapsed since the big bang.  

Andersson, Galloway and Howard call this cosmological time {\em regular} if $\tau(p)<\infty$ for all $p\in M$ and $\tau\to 0$ along every inextensible past causal curve.  
While   $\tau_{AGH}$ may not be differentiable, Sormani and Vega  \cite{SV-Null} showed that whenever $\tau_{AGH}$ is regular it is at least \emph{locally anti-Lipschitz} in the sense of Chru\' sciel, Grant and Minguzzi \cite{CGM}. Namely, for every point $p\in N$ there is a neighborhood $U$ of $p$ that has a Riemannian metric with a distance function $d_U: U \times U \to [0,\infty)$ 
such that for all $q,q'\in U$ we have
 \be\label{eqCGM}
q \textrm{ in the causal future of } q' \implies \tau(q)-\tau(q') \geq d_U(q,q') .
 \ee
 
Sormani and Vega  \cite{SV-Null} defined the notion of  \emph{null distance} between two events as the infimum of the
null length over piecewise causal curves:
\be \label{defn-dhat}
\hat{d}_{\tau}(p,q) := \inf \{ \hat{L}_\tau(\beta) : \beta \textrm{ piecewise casual from } p \textrm{ to } q \textrm{ via } x_i \in \beta\}
\ee
so that either $x_i$ is in the causal future of $ x_{i+1}$ or $x_{i+1}$ is in the causal future of $x_i$,
where the \emph{null length} of the curve $\beta$ as in  \eqref{defn-dhat} is
\be \label{defn-Lhat}
\hat{L}_\tau(\beta) := \sum_{i=1}^k |\tau(x_i) - \tau(x_{i-1})|.
\ee
They observe that
\be
\label{dhat-time}
 \dhat_ \tau (p,q) \ge |\tau(p)-\tau(q)| \qquad \text{for any}\, p,q \in N.
\ee
Note that the time function, $\tau$, here need only be a {\emph generalized time function}: $\tau$ increases along causal curves but is not necessarily continuous.

Sormani and Vega  \cite{SV-Null} showed that the null distance converts the Minkowski space endowed with its standard time function into a metric space whose $\hat{d}_\tau$ ball about a point $p$ of radius $R$ is
\emph{causal cylinder}, whose top is the level set $\tau^{-1}(\tau(p)+R)$ 
intersected with the point's \emph{causal future}, $J^+(p)$,  as depicted in Figure
~\ref{fig:cone-cyl}.   

\begin{figure}[h] %  figure placement: here, top, bottom, or page
   \centering
   \includegraphics[width=3cm]{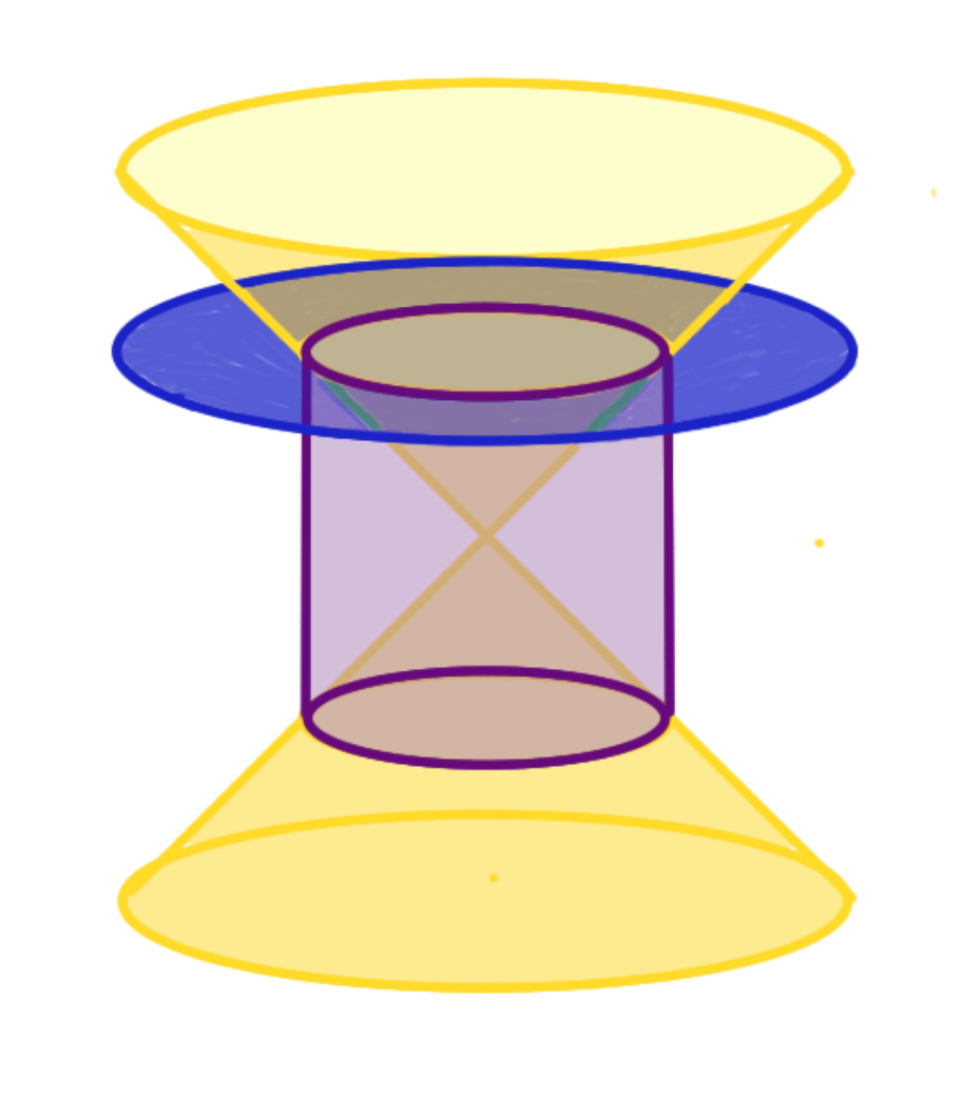} 
   \caption{In Minkowski  space with $\tau=t$ , the $\hat{d}_\tau$ ball is a cylinder 
   aligned with a light cone and a level set of $\tau$, so that (\ref{defn-encodes-causality}) holds.
   }
   \label{fig:cone-cyl}
\end{figure}

Sormani and Vega \cite{SV-Null}  proved that if $\tau$ is a regular cosmological time function, 
$\tau=\tau_{AGH}$,  or, more generally, if $\tau$ is any other time function satisfying the 
anti-Lipschitz condition of Chru\' sciel, Grant and Minguzzi \eqref{eqCGM},
  then $\hat{d}_\tau$ is definite:
\be
q=q' \iff \hat{d}_\tau(q,q')=0
\ee
and induces the topology of the original manifold, $N$.
Sormani and Vega conjectured that under these hypotheses, $\hat{d}_\tau$ also \emph{encodes causality}: 
\be \label{defn-encodes-causality}
\hat{d}_\tau(q,q') = \tau(q)-\tau(q') \iff q \textrm{ is in the causal future of } q'
\ee
and proved this for warped product spacetimes.
For general spacetimes and time functions, it is immediate from the definition of $\hat{d}_\tau$ that 
\be 
q \textrm{ is in the causal future of } q' \quad \implies \quad \hat{d}_\tau(q,q') = \tau(q)-\tau(q')
\ee
but the other direction was shown to be false without a stronger assumption on $\tau$ \cite{SV-Null}.  See Examples~\ref{t-cubed} and~\ref{ex-missing-ray} within.  
 The null distance has been studied further in the work of 
Allen and Burtscher \cite{Allen-Burtscher-20}, Vega \cite{Vega21}, Kunzinger and Steinbauer \cite{Kunzinger-Steinbauer-21},
and Graf and Sormani \cite{Graf-Sormani}.

In this paper we prove that the null distance $\hat{d}_\tau$ locally encodes causality whenever 
$\tau$ satisfies the anti-Lipschitz condition of Chru\' sciel, Grant and Minguzzi.  

\begin{thm} \label{encodes-causality-locally}
Let $(N^{n+1},g)$ be a Lorentzian manifold of dimension $n+1$, $n\geq 1$. Suppose $\tau:N \to [0,\infty)$ is a 
generalized time function that is locally anti-Lipschitz:  about every point $p\in N$ there is a neighborhood $U$ that has a Riemannian metric with a distance function $d_U: U \times U \to [0,\infty)$ 
such that for all $q,q'\in U$ we have:
\be
q \textrm{ in the causal future of } q' \implies \tau(q)-\tau(q') \geq d_U(q,q') .
\ee
Then 
$\hat{d}_\tau$ locally encodes causality: about every point $p\in N$ there is a neighborhood $W$ such that
for all $q \in W$ we have
\be
\hat{d}_\tau(p,q) = \tau(q)-\tau(p) \iff q \textrm{ is in the causal future of } p
\ee
and 
\be
\hat{d}_\tau(p,q) = \tau(p)-\tau(q) \iff q \textrm{ is in the causal past of } p.
\ee
\end{thm}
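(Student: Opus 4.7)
The forward implications in both biconditionals follow immediately from the definition of $\hat{d}_\tau$, so I focus on the reverse directions; I treat the future case, assuming $\hat{d}_\tau(p,q) = \tau(q) - \tau(p) =: T \geq 0$ and aiming to conclude $q \in J^+(p)$ (the past case is symmetric). The plan is to set up a nested family of neighborhoods, extract an infimizing sequence of piecewise causal curves from $p$ to $q$ with vanishing ``backward'' $\tau$-variation, and construct a shadow causal curve in $J^+(p)$ whose endpoint is arbitrarily close to $q$.

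First I would choose, inside the anti-Lipschitz neighborhood $(U, d_U)$, a convex normal neighborhood $V$ of $p$ on which $\exp_p$ is a diffeomorphism from a star-shaped $\Omega \subset T_pN$, so that $J^+(p) \cap V$ is the image under $\exp_p$ of $\Omega$ intersected with the $g_p$-future causal cone, hence closed in $V$. I would further shrink $V$ so that $\overline{B_{d_U}(p, 3\rho)} \subset V$ for some $\rho > 0$ and the components of $g$ in normal coordinates differ from those of $g_p$ by at most $C_0 \rho^2$ in $C^0$, and then set $W := B_{d_U}(p, \rho)$. Anti-Lipschitz applied to consecutive causal subarcs yields the trapping inequality $d_U(p, x_i) \leq \sum_{j\leq i} \ell_j \leq \hat{L}_\tau(\beta)$, which forces every piecewise causal $\beta$ from $p$ to a point of $W$ with $\hat{L}_\tau(\beta) < 3\rho$ to stay inside $V$.

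Next I would pick piecewise causal $\beta_n$ from $p$ to $q$ with $\hat{L}_\tau(\beta_n) \to T$, writing each $\beta_n$ in terms of breakpoints $p = x_0^{(n)}, \dots, x_{k_n}^{(n)} = q$, signs $\epsilon_i^{(n)} \in \{\pm 1\}$ (recording whether the segment goes to the future or the past), and $\tau$-jumps $\ell_i^{(n)} \geq 0$. Since $\sum_i \epsilon_i^{(n)} \ell_i^{(n)} = T$ and $\sum_i \ell_i^{(n)} = \hat{L}_\tau(\beta_n)$, the total backward variation
\[
B_n := \sum_{i : \epsilon_i^{(n)} = -1} \ell_i^{(n)} = \tfrac{1}{2}\bigl(\hat{L}_\tau(\beta_n) - T\bigr) \longrightarrow 0,
\]
and for large $n$ the trapping lemma places $\beta_n \subset V$. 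The crux is then the following key lemma: there exists $C = C(V) \geq 1$ such that $d_U(y, J^+(p) \cap V) \leq C \cdot B$ whenever $y$ is the endpoint of a piecewise causal curve from $p$ inside $V$ with total backward $\tau$-variation $B$. Granted this, one has $d_U(q, J^+(p)\cap V) \leq C B_n \to 0$, and closedness of $J^+(p)\cap V$ in $V$ then forces $q \in J^+(p)$.

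To prove the key lemma I would process the breakpoints $p = x_0, \dots, x_k = y$ of $\beta$ in order, maintaining a shadow $z_i \in J^+(p) \cap V$ with deviation $\delta_i := d_U(z_i, x_i)$, starting from $z_0 = p$, $\delta_0 = 0$. On a backward segment I set $z_i := z_{i-1}$, so $\delta_i \leq \delta_{i-1} + d_U(x_{i-1}, x_i) \leq \delta_{i-1} + \ell_i$ by anti-Lipschitz. On a forward segment I parallel-transport the future causal tangent of the geodesic from $x_{i-1}$ to $x_i$ along the geodesic connecting $x_{i-1}$ to $z_{i-1}$ and flow along the resulting future causal geodesic from $z_{i-1}$, producing $z_i \in J^+(z_{i-1}) \subset J^+(p)$; a Jacobi field comparison using the $O(\rho^2)$ metric distortion then yields $\delta_i \leq (1 + C_1\rho^2)\delta_{i-1}$. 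Iterating and using $\hat{L}_\tau(\beta) \leq 3\rho$,
\[
\delta_k \leq \exp\bigl(C_1 \rho^2 \hat{L}_\tau(\beta)\bigr) \cdot B \leq e^{3 C_1\rho^3} \cdot B =: C(V) \cdot B.
\]
The hard part will be the key lemma. The principal concern is that the number $k_n$ of breakpoints is not a priori bounded, so any per-segment multiplicative loss would blow up; the shadow construction succeeds precisely because its amplification is controlled per unit $\hat{L}_\tau$-length (hence by $3\rho$) rather than per breakpoint. The technical content is to make the forward-step Jacobi comparison uniform over all pairs $(x_{i-1}, z_{i-1})$ in $V$ with bounded deviation and all admissible tangent directions, which convex normality of $V$ supplies once $\rho$ is taken small enough.
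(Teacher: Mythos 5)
Your first half runs parallel to the paper's: the identities $\sum_i\epsilon_i^{(n)}\ell_i^{(n)}=T$, $\sum_i\ell_i^{(n)}=\hat L_\tau(\beta_n)$ give exactly the small-backward-variation statement of Lemma~\ref{small-zags-1}, and the anti-Lipschitz trapping is Lemma~\ref{small-zags-2} (note the paper traps via a first-exit argument in a $\hat d_\tau$-ball; your chain $d_U(p,x_i)\le\sum_{j\le i}\ell_j$ presupposes all earlier breakpoints already lie in $U$, so it needs the same first-exit care, and you in fact need the \emph{whole segments}, not just the breakpoints, inside the convex neighborhood $V$ --- otherwise two breakpoints of $V$ may be causally related in $N$ only through curves leaving $V$, and the $V$-geodesic joining them need not be causal, so your forward step has no admissible initial velocity). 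Where you genuinely diverge is the second half: the paper uses Temple's future optical function $\omega$ as a Lipschitz indicator of $J^+(p)$ (monotone along future causal curves, $\omega\ge 0$ implies causal future, Lipschitz for an auxiliary Riemannian metric) and telescopes $\omega$ along $\beta_n$, spending the Lipschitz constant only on the past segments whose parameter length Lemma~\ref{small-zags-2} already controls; you instead build a genuinely future causal piecewise-geodesic shadow from $p$ ending within $O(B_n)$ of $q$ and invoke closedness of the causal future in a convex neighborhood. That route is plausible and more elementary in spirit, but the closed set must be $J^+(p,V)$, the causal future \emph{within} $V$ (which is closed in $V$ and equals the exponential image of the causal cone), not $J^+(p)\cap V$, which need not be relatively closed in a general spacetime; this is harmless for you since your shadow curve stays in $V$, but it must be said.

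The genuine gap is at the step you yourself flag as the hard part, and as written it does not close. The forward-step bound $\delta_i\le(1+C_1\rho^2)\delta_{i-1}$ is a fixed multiplicative loss \emph{per breakpoint}; since the number of breakpoints $k_n$ is unbounded along a minimizing sequence (see the zigzags of Examples~\ref{t-cubed} and~\ref{ex-wise}), iterating it gives $(1+C_1\rho^2)^{k_n}\to\infty$, and it does not imply your displayed conclusion $\delta_k\le\exp\bigl(C_1\rho^2\hat L_\tau(\beta)\bigr)B$. What that conclusion requires is a per-step factor weighted by the $i$-th segment, e.g.\ $1+C\ell_i$ or $e^{C\ell_i}$, with $\sum_i\ell_i=\hat L_\tau(\beta)\le 3\rho$. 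Such a length-weighted estimate is in fact the correct statement and is available --- anti-Lipschitz bounds the $d_U$-length of each future segment by its $\tau$-jump $\ell_i$, smooth dependence of $\exp^{-1}$ on endpoints in the compact convex $V$ bounds the velocity of the connecting causal geodesic by $C\ell_i$, and the variation $\Gamma(s,t)=\exp_{\sigma(s)}(tP_sv)$ through geodesics with parallel-transported velocities has Jacobi fields with $J(0)=\sigma'(s)$, $J'(0)=0$, so a Gronwall estimate in the auxiliary Riemannian metric (Lorentzian curvature bounds give no Rauch-type comparison, so one must argue by ODE estimates on the compact set) yields deviation growth at most $e^{C\ell_i}$ per step --- but none of this is in your proposal: the key lemma's stated bound is inconsistent with the conclusion drawn from it, and the uniform estimate that makes the number of breakpoints irrelevant is asserted rather than proved. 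Repairing it means proving that $\ell_i$-weighted stability estimate on $V$ (and restoring $J^+(p,V)$ and whole-segment trapping as above); the paper's optical-function argument avoids the issue entirely because the future segments contribute to the telescoped sum with a favorable sign and need no quantitative control at all.
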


In particular, we have the following important corollary.

\begin{cor} \label{cor-encodes-causality}
If $\tau$ is a regular cosmological time function then
$\hat{d}_\tau$ locally encodes causality.
\end{cor}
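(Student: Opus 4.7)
The plan is straightforward: this corollary should follow immediately by invoking Theorem~\ref{encodes-causality-locally} once we verify that its hypothesis is satisfied by every regular cosmological time function. The hypothesis of Theorem~\ref{encodes-causality-locally} is the local anti-Lipschitz condition of Chru\'sciel, Grant and Minguzzi (\ref{eqCGM}), so what we need is a statement of the form: if $\tau = \tau_{AGH}$ is regular in the sense of Andersson, Galloway and Howard, then about every point $p \in N$ there is a neighborhood $U$ and a Riemannian distance $d_U$ on $U$ such that $\tau(q) - \tau(q') \geq d_U(q,q')$ whenever $q$ lies in the causal future of $q'$ inside $U$.

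This is exactly the content of a result of Sormani and Vega in \cite{SV-Null}, which we already cited in the introduction just after formula (\ref{eqCGM}). So the proof reduces to one sentence: apply the Sormani--Vega result to produce the local anti-Lipschitz neighborhoods, and then invoke Theorem~\ref{encodes-causality-locally} at each point $p \in N$ to obtain the neighborhood $W$ on which $\hat{d}_\tau$ locally encodes the causal future and causal past of $p$.

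The only potential obstacle is a compatibility issue between the neighborhood $U$ provided by the anti-Lipschitz property and the neighborhood $W$ required in the conclusion, but since Theorem~\ref{encodes-causality-locally} itself is stated as a purely local consequence of the anti-Lipschitz hypothesis, no additional argument is needed. Thus the corollary is genuinely an immediate specialization of the main theorem to the most important class of time functions, namely the regular cosmological ones.
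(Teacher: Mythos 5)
Your proposal is correct and matches the paper's reasoning exactly: the corollary is obtained by citing the Sormani--Vega result from \cite{SV-Null} that a regular cosmological time function is locally anti-Lipschitz in the sense of \eqref{eqCGM}, and then applying Theorem~\ref{encodes-causality-locally}. No further argument is needed, just as you say.
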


Theorem~\ref{encodes-causality-locally} is proven in Section~\ref{secLocalCausality}.   An outline of the proof is provided at the beginning of that section.  Note that there are examples of spacetimes where the null distance defined using cosmological time does not encode causality globally.  See Example~\ref{ex-missing-ray} within.  Nevertheless we prove global causality in Theorem~\ref{proper-causality} under the additional hypothesis that the time function is proper,  see Section~\ref{sect:proper}.
%Warning this example is not a regular cosmological time.

Once we know the causal structure on a Lorentzian manifold, we can recover the null structure, determine the null cones, and thus also recover the Lorentzian metric up to its conformal class, cf.  \cite[Section 1.9]{Minguzzi-2019}.   That is, if $F: N_1\to N_2$ is a smooth map that preserves causality, 
\be
p \textrm{ is in the causal future of } q \iff F(p) \textrm{ is in the causal future of }F(q), 
\ee 
one can rescale to see that $F^*$ preserves future causal vectors,
\be
\{v\,|\, F_*g_1(v,v)\ge 0\}=\{v\,|\, g_2(v,v)\ge 0\}.
\ee
One can then deduce that $F_*g_1=\phi^2 g_2$ where
$\phi:N_2\to (0,\infty)$ is a conformal factor.   If we also have a pair of smooth cosmological time functions $\tau_i: N_i\to [0,\infty)$ and we know $\tau_1=\tau_2\circ F$, then the fact that
\be
g_i(\nabla \tau_i, \nabla \tau_i)=-1 \implies \phi=1 \textrm{ everywhere. }
\ee
Thus $F: N_1\to N_2$ is a Lorentzian isometry.     

In this paper we prove the following theorem without assuming that $F$ and $\tau_i$ are smooth and using only that $\hat{d}_\tau$ %locally
encodes causality for a regular cosmological time, $\tau=\tau_{AGH}$. 

\begin{thm} \label{Lorentzian-isom}
Let $(N_1, g_1, \tau_1)$ and $(N_2, g_2, \tau_2)$ be two $n+1$-dimensional Lorentzian manifolds, $n\geq 2$,  equipped with regular cosmological time functions $\tau_i$, $i=1,2$, such that  $\hat{d}_{\tau_i}$ %locally
encodes causality (for example, this will be the case if $\tau_i$ are proper). If there exists a bijection $F: M_1 \to M_2$ that preserves null distances,
\be
\hat{d}_{\tau_1} (p,q) = \hat{d}_{\tau_2} (F(p), F(q)) \quad  \text{for any} \quad p, q \in N_1,
\ee
and cosmological times,
\be
\tau_1 = \tau_2 \circ F,
\ee 
then $F$ is a diffeomorphism and is a Lorentzian isometry, $F^* g_2 =  g_1$.
\end{thm}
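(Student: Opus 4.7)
The plan is to promote $F$ from a null-distance isometry to a bi-causal homeomorphism, then to a smooth conformal diffeomorphism via classical causality rigidity, and finally to pin down the conformal factor using the defining property of regular cosmological time.

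First I would combine the assumptions that $F$ preserves $\tau$ and $\hat{d}_\tau$ with the hypothesis that each $\hat{d}_{\tau_i}$ encodes causality. For any $p,q\in N_1$, the equation $\hat{d}_{\tau_1}(p,q) = \tau_1(q)-\tau_1(p)$ holds if and only if the analogous equation holds on $N_2$ for $F(p),F(q)$; by the encoding-of-causality hypothesis, each of these is equivalent to $q$ being in the causal future of $p$ (respectively $F(q)$ in the causal future of $F(p)$). Applying the same reasoning to $F^{-1}$, the causal precedence relation is preserved in both directions, and similarly for the causal past. Moreover, by the Sormani--Vega results cited above, each $\hat{d}_{\tau_i}$ is a definite metric inducing the original manifold topology of $N_i$, and since $F$ is a bijective metric isometry between them it is automatically a homeomorphism.

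Next I would invoke the classical causal rigidity theorem of Hawking--King--McCarthy and Malament (see, e.g., \cite{Minguzzi-2019}): a bi-causal homeomorphism between strongly causal Lorentzian manifolds of dimension at least $3$ is automatically a smooth diffeomorphism that is conformal, i.e.\ $F^{*}g_2 = \phi^2 g_1$ for some smooth positive $\phi:N_1\to(0,\infty)$. The hypothesis $n\geq 2$ precisely gives $\dim N_i = n+1 \geq 3$, and strong causality of spacetimes admitting a regular cosmological time function follows from Andersson--Galloway--Howard.

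Finally, to conclude $\phi \equiv 1$, I would use the AGH property that each $\tau_i$ is locally Lipschitz, differentiable almost everywhere, and satisfies $g_i(\nabla^{g_i}\tau_i,\nabla^{g_i}\tau_i) = -1$ at every point of differentiability. Combining $\tau_1 = \tau_2\circ F$ with $F^{*}g_2=\phi^2 g_1$ and the standard formula for gradients under a conformal change yields $F_{*}\nabla^{g_1}\tau_1 = \phi^2\,\nabla^{g_2}\tau_2$ at common points of differentiability. Computing $g_2(F_{*}\nabla^{g_1}\tau_1,F_{*}\nabla^{g_1}\tau_1)$ in two ways, once via $F^{*}g_2=\phi^2 g_1$ (giving $-\phi^2$) and once via the identity just derived (giving $-\phi^4$), forces $\phi^2=1$ almost everywhere, hence $\phi\equiv 1$ on a dense set of $N_1$, which extends to all of $N_1$ by the smoothness of $\phi$. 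The main obstacle I expect is Step~2: justifying the application of Hawking--King--McCarthy--Malament rigidity in this low-regularity setting and confirming that the strong causality or distinguishing conditions hold for spacetimes with regular cosmological time; once this is in place, Steps~1 and~3 are short and essentially computational.
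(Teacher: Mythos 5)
Your proposal is correct and follows the paper's skeleton for the first two steps, but finishes differently. Steps 1--2 match the paper: the encoding hypothesis plus $\tau_1=\tau_2\circ F$ and preservation of $\hat d_\tau$ make $F$ a causal bijection in both directions, and a classical causality-rigidity theorem upgrades it to a smooth conformal isometry. The paper invokes Levichev's theorem, which applies to a mere causal bijection between distinguishing spacetimes, so no continuity of $F$ is ever needed; you instead route through Hawking--King--McCarthy/Malament, supplying the missing continuity from the Sormani--Vega fact that $\hat d_{\tau_i}$ induces the manifold topology (and since those results are usually phrased for the chronological order, one quietly uses $I^+(p)=\operatorname{int}J^+(p)$ to pass from preservation of $J^+$ to preservation of $I^+$ under a homeomorphism). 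Both routes are legitimate; the paper's is slightly leaner on hypotheses about $F$. The genuine divergence is the final step. You kill the conformal factor pointwise: from $\tau_1=\tau_2\circ F$ and $F^*g_2=\phi^2 g_1$ you get $F_*\nabla^{g_1}\tau_1=\phi^2\nabla^{g_2}\tau_2$ at a.e.\ point, and comparing $g_2(F_*\nabla^{g_1}\tau_1,F_*\nabla^{g_1}\tau_1)=-\phi^2$ with $-\phi^4$ forces $\phi\equiv 1$; this is correct and rests on the same AGH facts (regular cosmological time is locally Lipschitz with $|\nabla\tau|_g=1$ a.e.) that the paper uses. The paper instead runs an integral argument, computing the volume of a region where $\phi>1$ in two ways via the coarea formula and $|\nabla\tau_i|=1$ a.e.; as Remark~\ref{extend-to-low-regularity-spaces} explains, that formulation was chosen deliberately so the proof can later be imitated on low-regularity limit spaces. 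Your pointwise computation is shorter, while the paper's version is more robust for the intended generalizations; both are valid here.
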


This theorem is proven in Section~\ref{secIsom}. It is natural to ask why we wish to prove Theorem~\ref{Lorentzian-isom} without assuming that $F: M_1\to M_2$ is differentiable.   The short answer is that $F$ is a map between metric spaces and differentiability is not defined between such spaces.   The more serious answer is that we plan to apply this theorem to study limits of sequences of Lorentzian manifolds.   Suppose $(N_j, h_j) \to (N_\infty, h_\infty)$ is defined by requiring that 
\be
(N_j, \hat{d}_{\tau_j}, \tau_j) \to (N_\infty, \hat{d}_{\tau_\infty}, \tau_\infty)
\ee
in the Intrinsic Flat or Gromov-Hausdorff sense with control on the $\tau_j$ as well (to appear in \cite{future-work}), then we only expect the limit spaces to be defined uniquely up to a distance preserving and cosmological time preserving bijection.   In order to guarantee that the limit space is unique when it happens to be a smooth Lorentzian manifold, we need Theorem~\ref{Lorentzian-isom} to be proven without assuming that $F$ is a smooth map.   The same idea arises when studying Gromov-Hausdorff and Intrinsic Flat limits of Riemannian manifolds, $(N, g)$, viewed as metric spaces, $(N, d_g)$, and in that setting one shows the limit spaces are unique up to distance preserving bijection and that a distance preserving bijection between smooth Riemannian manifolds is in fact a smooth Riemannian isometry.

It would be interesting to explore to what extent these theorems hold on lower regularity spacetimes like those studied by Harris \cite{Harris-82}, Alexander and Bishop \cite{Alexander-Bishop-08},
Chru\'sciel and Grant \cite{Chrusciel-Grant-12}, Burtscher \cite{Burtscher-12},
 Kunzinger and S\"amann \cite{Kunzinger-Saemann-18},  Alexander, Graf, Kunzinger and Sämann \cite{AGKS}, Graf and Ling \cite{graf2018maximizers},  Cavalletti and Mondino \cite{Cavalletti-Mondino}, McCann and S{\"a}mann \cite{mccann2021lorentzian}, Ak\'e Hau, Cabrera Pacheco and Solis \cite{AkeHau-etal},
and Burtscher and Garc{\'i}a-Heveling \cite{Burtscher-Garcia-Heveling}.    Note that Kunzinger and Steinbauer have already extended the notion of the null distance to their notion of a Lorentz length space \cite{Kunzinger-Steinbauer-21}.  Within, most of our proofs do not require much regularity as long as piecewise causal curves behave well enough.  However we do apply results of Temple \cite{Temple-1938}, Levichev \cite{Levichev-1987}, Hawking \cite{Hawking-2014}, and Zeeman \cite{Zeeman-64} that would need to be extended. See Remark~\ref{extend-to-low-regularity-spaces}.

\vspace{.2cm}
\noindent{\bf Acknowledgements:}   The authors would like to thank MSRI for its special semester on Mathematical General Relativity where we began collaborating together.   

\section{Examples}

Here we present two examples.   The first one is from \cite{SV-Null} which we repeat here because it clarifies why our proofs involve the reverse Lipschitz property:

\begin{example} \label{t-cubed} 
Consider $\tau = t^3$ on Minkowski space which fails to satisfy the reverse Lipschitz property.    In \cite{SV-Null} it was proven that for any two points $p, q$ in the $\{t = 0\}$ slice, we have $\hat{d}_\tau(p,q) = 0$.    Thus $\hat{d}_\tau$
is both not definite and fails to encode causality both locally and globally.

To see why $\hat{d}_\tau(p,q) = 0$, let $c(s)=(0,x(s))$ be the straight line from $c(0)=p$ to $c(D)=q$ parametrized by Euclidean arclength.   Let 
$\beta_j(s)$
be a piecewise null curve from $\beta_j(0)=p$ to $\beta_j(D)=q$ with $2j$ segments such that 
\begin{eqnarray}
\beta_j(iD/(2j))&=&(0, x(iD/(2j))) \textrm{ for } i \textrm{ even and } \\
\beta_j(iD/(2j))&=&(D/(2j), x(iD/(2j))) \textrm{ for } i \textrm{ odd.}
\end{eqnarray}
Then
\begin{eqnarray}
\hat{d}_\tau(p,q)&\le& \sum_{i=1}^{2j} |\tau(\beta_j(iD/(2j)))-\tau(\beta_j((i-1)D/(2j)))|\\
&=&\sum_{i=1}^{2j} (D/(2j))^3=(2j) (D/(2j))^3\to 0.
\end{eqnarray}
\end{example}

\begin{example} \label{ex-missing-ray} %not regular!!!
Let $N$ be Minkowski upper-half space with a half-line removed:
\be
N= \{(t,x)\, : \, t\in (0,\infty), \, x\in {\mathbb{R}}^3\} \setminus \{ (t,0,0,0)\,:\, t\in [2,\infty)\}
\ee
endowed with the Minkowski metric.   It is easy to see that the cosmological time $\tau=\tau_{AGH}=t$ satisfies the reverse Lipschitz condition.   However,  $\hat{d}_\tau$ does not encode causality.  
To see this, consider the point $p=(1,-1,0,0)$ and $q=(3,1,0,0)$.   In Minkowski space, these points are connected by a future causal curve $C(s)=(1+s,-1+s,0,0)$ which runs from $C(0)=p$ to $C(2)=q$, however this curve runs through 
$C(1)=(2,0,0,0)\notin N$.   In fact, $q$ is not in the causal future of $p$ in $N$.   

Nevertheless, for every $\epsilon>0$ we have a piecewise causal curve which runs
first past causal from $p$ to $p_\epsilon=(1-\epsilon, -1,\epsilon, 0)$ and then along a future causal curve 
\be
C(s)=(1-\epsilon+s,-1+s,\epsilon,0) \textrm{ from } C(0)=p_\epsilon \textrm{ to } C(2)=q_\epsilon
\ee
and then future causal from $q_\epsilon=(3-\epsilon,1,\epsilon,0)$ to $q$.   See Figure~\ref{fig:missing-ray}. Thus
\begin{eqnarray}
\hat{d}_\tau(p,q) &\le& |\tau(p)-\tau(p_\epsilon)|+|\tau(p_\epsilon)-\tau(q_\epsilon)|+|\tau(q)-\tau(q_\epsilon)|\\
&=& \epsilon + 2 +\epsilon \to 2=\tau(q)-\tau(p) \textrm{ as } \epsilon \to 0.
\end{eqnarray}
Since $2=\tau(q)-\tau(p)\ge \hat{d}_\tau(p,q)$ for all $p,q\in M$,  and since the reverse inequality holds true (see \eqref{dhat-time}) we have 
\be\label{missing-encodes-causality}
\hat{d}_\tau(p,q) =\tau(q)-\tau(p) 
\ee
so $\hat{d}_\tau$ does not encode causality globally.   
It does encode causality locally if we take our neighborhoods 
to be small cylindrical blocks that avoid the missing halfline and are thus isometric to cylindrical blocks in Minkowski space.

\end{example}

\begin{figure}[h] %  figure placement: here, top, bottom, or page
   \centering
   \includegraphics[width=3cm]{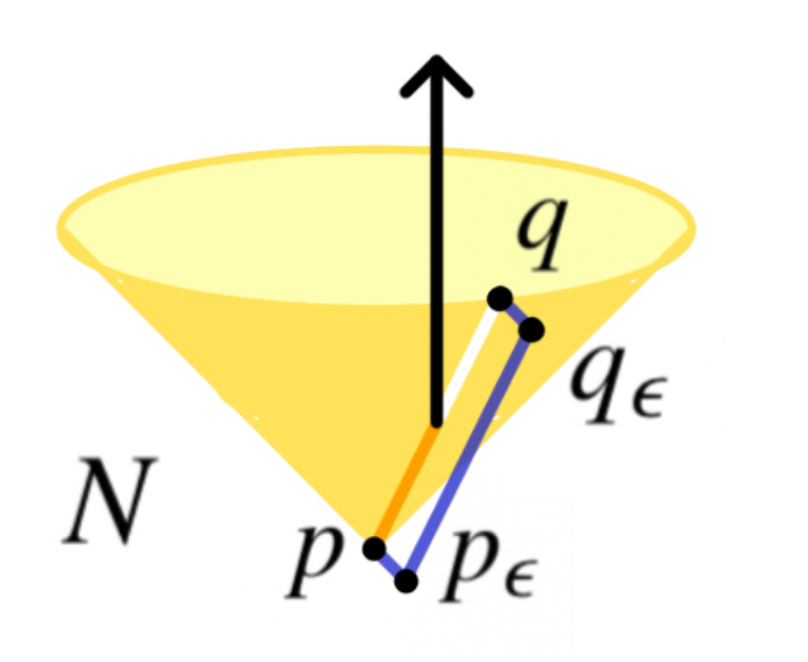} 
   \caption{In Example~\ref{ex-missing-ray}, $N$ is Minkowski upper-half space with a half-line (depicted as a black arrow) removed.  Here $\hat{d}_\tau$ for $\tau=\tau_{AGH}$ fails to encode causality globally: there exist $ p,q$ satisfying (\ref{missing-encodes-causality}) such that
 $q  \notin J^+_p$ because $q$ lies in the shadow of the halfline for light rays from $p$.}
   \label{fig:missing-ray}
\end{figure}

\section{The Null Distance Encodes Causality Locally}\label{secLocalCausality}

In this section we prove Theorem~\ref{encodes-causality-locally}.  In particular, we would like to show that around every point $p$ there is a neighborhood $U$,  such that if $q\in U$,  then $\hat{d}_{\tau}(p,q) = \tau(q)-\tau(p)$ implies $q\in J_+(p)$.

In the first part of Section \ref{secRevLipTime} we do not yet restrict to neighborhoods.  We show that if $\hat{d}_{\tau}(p,q) = \tau(q)-\tau(p)$ holds, then for every $\epsilon>0$ there is a curve $\beta$ from $p$ to $q$ as in \eqref{defn-dhat}, zigzaging backwards and forwards in time and such that the null length of its past directed part is less than $\epsilon$, see Figure~\ref{fig:zig-zag} and Lemma \ref{small-zags-1}. Subsequently, in Lemma \ref{small-zags-2} we prove a refined localized version of this general property under the assumption  that the time function is locally anti-Lipschitz.  However,  note that the existence of a curve $\beta$ as described above does not immediately imply that $q\in J_+(p)$,  even though $\epsilon$ can be chosen to be arbitrarily small.   In fact, we just presented a counter example above, see Example~\ref{ex-missing-ray}.

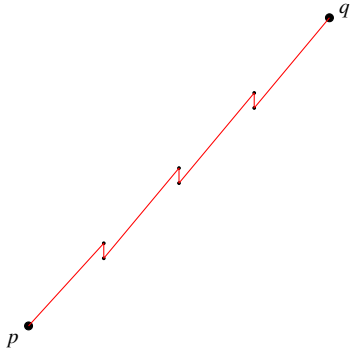
\begin{figure}[h!] \label{fig:zig-zag}
\centering
\begin{tikzpicture}

%\node (t) at (0,4.5){$t$};
%\node (x) at (4.5,0){$x$};
\node (p) at (-0.2,-0.2){\scriptsize \textit{p}};
\node (q) at (4.2,4.2){\scriptsize \textit{q}};

%\draw[->](-0.15,0) to (x);
%\draw[->](0,-0.15) to (t);

\filldraw[black](0,0) circle (1.5pt); 
\filldraw[black](1,1.1) circle (0.5pt);
\filldraw[black](1,0.9) circle (0.5pt);
\filldraw[black](2,2.1) circle (0.5pt);
\filldraw[black](2,1.9) circle (0.5pt);
\filldraw[black](3,3.1) circle (0.5pt);
\filldraw[black](3,2.9) circle (0.5pt);
\filldraw[black](4,4.1) circle (1.5pt);

\draw[-,thin,red](0,0) to (1,1.1);
\draw[-,thin,red](1,1.1) to (1,0.9);
\draw[-,thin,red](1,0.9) to (2,2.1);
\draw[-,thin,red](2,2.1) to (2,1.9);
\draw[-,thin,red](2,1.9) to (3,3.1) ;
\draw[-,thin,red](3,3.1)  to (3,2.9);
\draw[-,thin,red](3,2.9) to (4,4.1);

\end{tikzpicture}

\caption{A piecewise causal curve $\beta$ from $p$ to $q$  that almost achieves
$\hat{d}_\tau(p,q)=\tau(q)-\tau(p)$ has very short past causal segments.}
\end{figure}

In order to prove Theorem~\ref{encodes-causality-locally},  we need a suitable ``indicator function" of the causal future,
\be
J_+(p)=\{q\,|\, q \textrm{ is in the causal future of } p\}
\ee 
at least within a neighborhood.  As discussed in Section \ref{secOptical}, it turns out that certain optical functions defined by Temple in 1938 \cite{Temple-1938} are well suited for this purpose. In particular, their level sets are null hypersurfaces generated by null geodesics which can be used to set up a coordinate system capturing the local causal structure of the spacetime, see Theorem \ref{thm-opt}. 

In Section~\ref{secFinalLocalCausality} we apply this coordinate system, combined with the Lipschitzness of the optical function and the anti-Lipschitzness of the time function to complete the proof of Theorem~\ref{encodes-causality-locally}.

\subsection{Almost Minimizing Piecewise Causal Geodesics}\label{secRevLipTime}

Before establishing an implication of  $\hat{d}_{\tau}(p,q) = \tau(q)-\tau(p)$ for ``almost minimizers" for the infimum in the definition of the null distance (\ref{defn-dhat}), we review a basic fact about lengths of piecewise causal curves.   Recall that on any metric space $(M,d)$, a curve is \emph{$d$-rectifiable} if its \emph{$d$-rectifiable length} is finite:
\be \label{rect-length}
L_d(C[a,b])=\sup  \sum_{i=1}^m d(C(s_{i}),C(s_{i-1})) 
\ee
where the supremum is taken over all partitions $a=s_0\le s_1\le \cdots \le s_m=b$.   

\begin{lem}\label{lengths-agree}
On any Lorentzian manifold with any time function $\tau$, and for any piecewise causal curve, $C: [a,b] \to N$,
which is causal on segments $[a_{i-1},a_i]$ where $a=a_0\le a_1\le...\le a_N=b$ we see that the $\hat{d}_\tau$-rectifiable length of the curve agrees with the null length:
\be
L_{\hat{d}_\tau}(C[a,b])=\hat{L}_\tau(C[a,b]) = \sum_{i=1}^N |\tau(C(a_{i}))-\tau(C(a_{i-1}))| =\int_a^b |d/ds(\tau\circ C)|\, ds.
\ee
So the curve can always be reparametrized proportional to $\hat{d}_\tau$-length to
\be
C:[0,1]\to N \textrm{ such that } |d/ds(\tau\circ C)|=\hat{L}_\tau(C[a,b]).
\ee
\end{lem}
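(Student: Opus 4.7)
The plan is to verify the chain of equalities segment by segment and then assemble the reparametrization at the end. The driving observation is that on each causal subsegment $[a_{i-1},a_i]$, the composition $\tau\circ C$ is monotone, since a generalized time function is non-decreasing along causal curves. The middle equality $\hat{L}_\tau(C[a,b]) = \sum_{i=1}^N |\tau(C(a_i)) - \tau(C(a_{i-1}))|$ is just the definition \eqref{defn-Lhat} of the null length applied to the piecewise causal curve with breakpoints $a_0<\dots<a_N$. For the equality with $\int_a^b |d/ds(\tau\circ C)|\,ds$, monotonicity on each piece gives $\int_{a_{i-1}}^{a_i} |d/ds(\tau\circ C)|\, ds = |\tau(C(a_i))-\tau(C(a_{i-1}))|$ once $C$ is affinely (or more generally, absolutely continuously) parametrized on each causal segment, and summing over $i$ then yields the claim.

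For $L_{\hat{d}_\tau}(C[a,b]) = \hat{L}_\tau(C[a,b])$, I would first establish the pointwise identity $\hat{d}_\tau(C(s),C(s')) = |\tau(C(s))-\tau(C(s'))|$ for any $s,s'$ in a common causal segment. The upper bound follows from using the causal subcurve itself as a trivial piecewise causal competitor in \eqref{defn-dhat}, and the matching lower bound is \eqref{dhat-time}. Then, given any partition $a = s_0 \leq \dots \leq s_m = b$ entering \eqref{rect-length}, I would refine it to include every breakpoint $a_i$; by the triangle inequality for $\hat{d}_\tau$, refinement can only enlarge the partition sum. After refinement each consecutive pair $C(s_{k-1}),C(s_k)$ lies in a single causal segment, so its $\hat{d}_\tau$-distance equals $|\tau(C(s_k))-\tau(C(s_{k-1}))|$, and monotonicity of $\tau\circ C$ on that segment makes these telescope to $|\tau(C(a_i))-\tau(C(a_{i-1}))|$ within each segment, summing to $\hat{L}_\tau(C[a,b])$. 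Hence every sufficiently refined partition sum equals $\hat{L}_\tau(C[a,b])$, and so does the supremum.

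For the reparametrization I would set $\phi(s):=\hat{L}_\tau(C|_{[a,s]})$, which is continuous and weakly monotone from $[a,b]$ onto $[0,L]$ where $L=\hat{L}_\tau(C[a,b])$. Assuming $L>0$ (the case $L=0$ is trivial), collapsing any intervals on which $\phi$ is constant and then rescaling $[0,L]$ affinely to $[0,1]$ yields a reparametrized $\tilde{C}:[0,1]\to N$ under which $\tau\circ\tilde{C}$ is piecewise affine with constant absolute slope equal to $L$, as required. There is no real obstacle in this lemma; the only small care needed is in the refinement step, where one must ensure that the alternating signs of $\tau$-differences across adjacent segments do not interfere with the telescoping, but because each absolute value sits inside a single monotone segment and all sums are finite, this is pure bookkeeping.
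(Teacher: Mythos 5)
Your proposal is correct and follows essentially the same route as the paper's proof: refine an arbitrary partition to include the causal breakpoints, use the triangle inequality together with the identity $\hat{d}_\tau(C(s),C(s'))=|\tau(C(s))-\tau(C(s'))|$ on causal segments, telescope via monotonicity of $\tau\circ C$ on each piece, and then reparametrize proportionally to the null length (your cumulative-length function $\phi$ is just a repackaging of the paper's segment-by-segment affine parametrization). Your explicit remarks on the pointwise identity and on the regularity needed for the integral formula are slightly more careful than the paper's write-up, but not a different argument.
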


\begin{proof}
Given any partition, $a=s_0\le s_1\le \cdots \le s_m=b$, we can take a subpartition 
$a=s'_0\le s'_1\le \cdots \le s'_{m}=b$ such that 
\be
\{s'_0,s'_1,...,s'_{m}\}=\{s_0,s_1,...,s_{m}\} \cup\{a_0, a_1,..., a_N\}
\ee
and by the triangle inequality we have
\be
L_{\hat{d}_\tau}(C[a,b]) \ge \sum_{i=1}^{m'} \hat{d}_\tau(C(s'_{i}),C(s'_{i-1}))\ge \sum_{i=1}^m \hat{d}_\tau(C(s_{i}),C(s_{i-1})).
\ee
Since $C$ is causal on each segment $C[s'_{i-1}, s'_i]$ the middle term is
\be
\sum_{i=1}^{m'} \hat{d}_\tau(C(s'_{i}),C(s'_{i-1}))= \sum_{i=1}^{m'} |\tau(C(s'_{i}))-\tau(C(s'_{i-1}))| = \hat{L}_\tau(C).
\ee
Plugging this back in the middle and taking the supremum over partitions, $a=s_0\le s_1\le \cdots \le s_m=b$,
we see that 
\be
L_{\hat{d}_\tau}(C[a,b]) \ge \hat{L}_\tau(C)\ge \sup \sum_{i=1}^m \hat{d}_\tau(C(s_{i}),C(s_{i-1}))=L_{\hat{d}_\tau}(C[a,b]).
\ee

Let $C$ be a piecewise causal curve as in the formulation of the lemma,  and let $ L= L_{\hat{d}_\tau}(C[a,b])$. Without loss of generality, we may assume that $C:[0,1]\to N$, and that it is causal on the segments $[s_i,  s_{i+1}]$,  $i=0,\ldots, m-1$, where $0=s_0\le s_1\le \cdots \le s_m=1$.  In this case, on each segment $[s_i,  s_{i+1}]$, the function $s\mapsto \tau(C(s))$  is monotone.  If $C$ is future causal on $[s_i,  s_{i+1}]$ we may parametrize it so that
\be
\tau(C(s))=L(s -s_i) + \tau(C( s_i)) \quad \text{ for } s\in [s_i,  s_{i+1}],
\ee
and if $C$ is past causal on $[s_i,  s_{i+1}]$ we may parametrize it so that
\be
\tau(C(s))=-L(s -s_i) + \tau(C( s_i)) \quad \text{ for } s\in [s_i,  s_{i+1}].
\ee
In this case $|d/ds(\tau\circ C)|=L$, and we see that $C$ is  parametrized proportional to 
$\hat{d}_\tau$-rectifiable length because the segments are piecewise causal so within the segments
\be
|\tau (C(s)) - \tau(C(s'))|=\hat{d}_\tau(C(s)),C(s'))=L |s'-s|.
\ee
\end{proof}

Now we consider "almost minimizers" of infimum in the definition of the null distance (\ref{defn-dhat}).

\begin{lem} \label{small-zags-1}
Suppose $\hat{d}_\tau(p,q)= \tau(q)-\tau(p)$ where $\tau$ is a 
time function, then for any $\epsilon>0$
there exists a piecewise causal curve
$\beta:[0,1]\to N$ such that $\beta(0)=p$ and $\beta(1)=q$ 
with 
\be
0=s_0\le s_1\le s_2\le \cdots \le s_{2k}\le s_{2k+1}=1
\ee
such that if $s_{2i} \neq s_{2i+1}$ then 
$\beta$ is future causal on the interval $[s_{2i}, s_{2i+1}]$ for  $i=0, \ldots , k$ and such that if $s_{2i+1} \neq s_{2i+2}$ then  $\beta$ is  past causal on the interval $[s_{2i+1}, s_{2i+2}]$ for $i=0, \ldots, k-1$. Furthermore, we have
\be\label{zig-1}
\sum_{i=0}^{k} (\tau(\beta(s_{2i+1})) - \tau(\beta(s_{2i}))) < \hat{d}_\tau(p,q)+\epsilon
\ee
and 
\be \label{zag-1}
\sum_{i=0}^{k-1} |\tau(\beta(s_{2i+2})) - \tau(\beta(s_{2i+1}))| <\epsilon.
\ee
Moreover, we can parametrize $\beta$ proportional to $\hat{d}_\tau$-arclength (see Lemma~\ref{lengths-agree}).
Thus 
\be \label{zig-param}
\tau(\beta(s))-\tau(\beta(s'))= (s-s') \hat{L}_\tau(\beta) \,\, \text{for any } s,s' \in [s_{2i}, s_{2i+1}], 
\ee
for $i=0, \ldots , k$ and for $i=0, \ldots, k-1$
\be \label{zag-param}
\tau(\beta(s))-\tau(\beta(s'))= -(s-s')  \hat{L}_\tau(\beta) \,\, \text{for any } s,s' \in [s_{2i+1}, s_{2i+2}].
\ee
\end{lem}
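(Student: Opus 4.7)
The plan is to take any almost-minimizer of the infimum defining $\hat{d}_\tau(p,q)$ and exploit a telescoping identity for $\tau$ along the curve. First I would use the definition \eqref{defn-dhat} to pick a piecewise causal curve $\beta$ from $p$ to $q$, with breakpoints $p = x_0, x_1, \ldots, x_m = q$, whose null length satisfies $\hat{L}_\tau(\beta) < \hat{d}_\tau(p,q) + \epsilon$. Merging consecutive segments of the same causal orientation, and inserting degenerate (zero-length) subintervals at the ends if necessary so that the pattern truly starts and ends with a future-causal piece, I would relabel the partition in the alternating form $0 = s_0 \le s_1 \le \cdots \le s_{2k+1} = 1$ required by the statement, with $[s_{2i}, s_{2i+1}]$ future causal and $[s_{2i+1}, s_{2i+2}]$ past causal (either possibly degenerate).

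Setting
\begin{equation}
F = \sum_{i=0}^{k}\bigl(\tau(\beta(s_{2i+1})) - \tau(\beta(s_{2i}))\bigr), \qquad P = \sum_{i=0}^{k-1} \bigl|\tau(\beta(s_{2i+2})) - \tau(\beta(s_{2i+1}))\bigr|,
\end{equation}
I then observe two facts. First, the null length of $\beta$ satisfies $\hat{L}_\tau(\beta) = F + P$ because each term in \eqref{defn-Lhat} is an absolute difference and both contributions are nonnegative. Second, summing the \emph{signed} $\tau$-differences across the full partition produces a telescoping identity
\begin{equation}
\tau(q) - \tau(p) = F - P,
\end{equation}
since $\tau$ is monotone along each causal segment and thus the past-causal segments contribute with a minus sign. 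Combining these two identities with the hypothesis $\tau(q) - \tau(p) = \hat{d}_\tau(p,q)$ and with the almost-minimizing estimate $F+P < \hat{d}_\tau(p,q) + \epsilon$ yields $2P < \epsilon$ and $2F < 2\hat{d}_\tau(p,q) + \epsilon$, which are stronger than the required inequalities \eqref{zag-1} and \eqref{zig-1}.

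For the final parametrization statement I would simply invoke Lemma~\ref{lengths-agree} piece by piece and concatenate: each nondegenerate segment of $\beta$ may be reparametrized so that $|\tfrac{d}{ds}(\tau\circ\beta)|$ equals $\hat{L}_\tau(\beta)$ on that segment, after which \eqref{zig-param} and \eqref{zag-param} are immediate from the monotonicity of $\tau\circ\beta$ on each piece. The main (and really the only) obstacle is the regrouping and relabeling at the start: one has to be careful to merge runs of same-oriented segments and to insert degenerate pieces at the right locations so that the indexing in the lemma's statement is satisfied no matter whether the originally chosen almost-minimizer began or ended with a past-causal segment. Once this purely bookkeeping step is in place, the entire content of the lemma reduces to the two-line observation $F - P = \hat{d}_\tau(p,q)$ and $F+P < \hat{d}_\tau(p,q) + \epsilon$.
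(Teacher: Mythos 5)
Your proposal is correct and follows essentially the same route as the paper: take an $\epsilon$-almost-minimizing piecewise causal curve, put it in alternating form using degenerate segments, split $\hat{L}_\tau(\beta)$ into the future part $F$ and past part $P$, telescope $\tau$ along $\beta$, and combine with the hypothesis $\hat{d}_\tau(p,q)=\tau(q)-\tau(p)$; the parametrization claim is handled by Lemma~\ref{lengths-agree} in both arguments. The only (harmless) difference is that you use the exact identity $F-P=\tau(q)-\tau(p)$ where the paper uses the inequality $\tau(q)-\tau(p)\le F$, which gives you the slightly sharper bounds $2P<\epsilon$ and $2F<2\hat{d}_\tau(p,q)+\epsilon$.
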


Keep in mind that Example~\ref{t-cubed} satisfies the hypotheses of this lemma.

\begin{proof}
By the definition of $\hat{d}_\tau(p,q)$ we know that 
there exists a piecewise causal curve
$\beta:[0,1]\to N$ such that $\beta(0)=p$ and $\beta(1)=q$ 
where $\hat{L}_\tau(\beta) < \hat{d}_\tau(p,q)+\epsilon$.
By allowing $\beta$ to have segments where $s_i = s_{i+1}$, we
can ensure that $\beta$ is as in the formulation of lemma.  In this case we have 
\be \label{zig-1a}
\tau(\beta(s_{2i+1})) - \tau(\beta(s_{2i}))\ge 0
\ee
and
\be \label{zag-1a}
\tau(\beta(s_{2i+2})) - \tau(\beta(s_{2i+1}))\le 0.
\ee
On each interval $[s_i,s_{i+1}]$ where $s_i\neq s_{i+1}$ the function $s\mapsto \tau(\beta(s))$ is monotone. Consequently, we can parametrize $\beta$ starting with the 
first interval so that it satisfies (\ref{zig-param}) and then continuing along the
second interval satisfying (\ref{zag-param}) and so on up and down each interval
until at the end we reach $q=\beta(\hat{L}_\tau(\beta)/\hat{L}_\tau(\beta))=\beta(1)$.

By the
definition of $\hat{L}_\tau(\beta)$ and (\ref{zig-1a}),
we have
\be \label{full-sum}
\hat{L}_\tau(\beta) := \sum_{i=0}^{k-1} |\tau(\beta(s_{2i+2})) - \tau(\beta(s_{2i+1}))|+ \sum_{i=0}^{k} (\tau(\beta(s_{2i+1})) - \tau(\beta(s_{2i})))   
< \hat{d}_\tau(p,q)+\epsilon.
\ee
Dropping the first sum, which is nonnegative, we have (\ref{zig-1}).   

Telescoping our sum and then applying (\ref{zag-1a}), we obtain
\begin{eqnarray*}
\tau(q)-\tau(p)&=&\tau(\beta(s_{2k+1}))-\tau(\beta(s_{0})))\\
&=& \sum_{i=0}^{k-1} (\tau(\beta(s_{2i+2})) - \tau(\beta(s_{2i+1})))+ \sum_{i=0}^{k} (\tau(\beta(s_{2i+1})) - \tau(\beta(s_{2i}))) \\
& \le & \sum_{i=0}^{k} (\tau(\beta(s_{2i+1})) - \tau(\beta(s_{2i}))).
\end{eqnarray*}
Plugging this and the hypothesis of our lemma, $\hat{d}_\tau(p,q)= \tau(q)-\tau(p)$, into
(\ref{full-sum}), we get
\be
\sum_{i=0}^{k-1} |\tau(\beta(s_{2i+2})) - \tau(\beta(s_{2i+1}))|+\tau(q)-\tau(p) < \tau(q)-\tau(p)+\epsilon
\ee
which gives (\ref{zag-1}).
\end{proof}

We now prove a local consequence of this result under the assumption that the time function satisfies the locally anti-Lipschitz condition in the sense of Chru\' sciel, Grant and Minguzzi \cite{CGM}.  Note that the result below holds if we use a different Riemannian metric on $U$, up to rescaling the right hand side.

\begin{lem} \label{small-zags-2}
Given a point $p \in N$ and a neighborhood $U\subseteq N $ about $p$ that has a 
Riemannian metric with a 
distance function $d_U: U \times U \to [0, \infty)$, 
suppose that $\tau$ is a generalized time function satisfying the anti-Lipschitz condition 
(\ref{eqCGM}) for all $q,q'\in U$.   

Let $r_p>0$ be such that $B_{\dhat_\tau}(p, 2r_p) \subset U $
and take any  $\epsilon\in(0, r_p)$.  For any $q\in W_p= B_{\dhat_\tau}(p, r_p) \subset U$ such that
\be
\hat{d}_\tau(p,q)= \tau(q)-\tau(p)
\ee
there exists a piecewise causal curve
$\beta:[0,1]\to U \subseteq N$ such that $\beta(0)=p$ and $\beta(1)=q$ 
where
$\beta$ satisfies all the properties of Lemma~\ref{small-zags-1}
and we have the following estimate for the past causal intervals:
\be\label{zag-s}
\sum_{i=0}^{k-1} |s_{2i+2} - s_{2i+1}| \,<\, \epsilon/\hat{d}_\tau(p,q).
\ee
\end{lem}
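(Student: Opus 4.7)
The plan is to apply Lemma~\ref{small-zags-1} directly and then use the $\hat{d}_\tau$-arclength parametrization it provides to convert the $\tau$-change bound (\ref{zag-1}) into the desired parameter bound (\ref{zag-s}). A separate small step will be required to guarantee that the near-minimizing curve produced by Lemma~\ref{small-zags-1} actually stays inside $U$; the factor of $2$ in the hypothesis $B_{\hat{d}_\tau}(p, 2r_p) \subset U$ together with the restrictions $\hat{d}_\tau(p,q) < r_p$ and $\epsilon < r_p$ are tailored for exactly this purpose.

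First I would apply Lemma~\ref{small-zags-1} with the given $\epsilon$, obtaining a piecewise causal curve $\beta \colon [0,1] \to N$ from $p$ to $q$, a partition $0 = s_0 \le s_1 \le \cdots \le s_{2k+1} = 1$, and a parametrization proportional to $\hat{d}_\tau$-arclength for which (\ref{zig-param}) and (\ref{zag-param}) hold, together with $\hat{L}_\tau(\beta) < \hat{d}_\tau(p,q) + \epsilon$ and the past-causal $\tau$-variation bound (\ref{zag-1}). Since $q \in W_p$ gives $\hat{d}_\tau(p,q) < r_p$ and we are assuming $\epsilon < r_p$, we have $\hat{L}_\tau(\beta) < 2r_p$. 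For any $s \in [0,1]$ the restriction $\beta[0,s]$ is itself a piecewise causal curve from $p$ to $\beta(s)$, so
\[
\hat{d}_\tau(p, \beta(s)) \,\le\, \hat{L}_\tau(\beta[0,s]) \,\le\, \hat{L}_\tau(\beta) \,<\, 2r_p,
\]
and hence $\beta(s) \in B_{\hat{d}_\tau}(p, 2r_p) \subset U$. Thus the entire image of $\beta$ lies in $U$ and retains all the properties granted by Lemma~\ref{small-zags-1}.

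Next I would convert the $\tau$-variation bound on past-causal segments into the required parameter bound. Specializing (\ref{zag-param}) to $s = s_{2i+2}$ and $s' = s_{2i+1}$ gives, for each $i$,
\[
|\tau(\beta(s_{2i+2})) - \tau(\beta(s_{2i+1}))| \,=\, (s_{2i+2} - s_{2i+1})\, \hat{L}_\tau(\beta),
\]
so summing in $i$ and applying (\ref{zag-1}) yields
\[
\hat{L}_\tau(\beta) \sum_{i=0}^{k-1} (s_{2i+2} - s_{2i+1}) \,<\, \epsilon.
\]
Since $\hat{L}_\tau(\beta) \ge \hat{d}_\tau(p,q) > 0$ (because $\hat{L}_\tau(\beta)$ is one of the null lengths over which the infimum in (\ref{defn-dhat}) is taken), dividing through delivers the desired estimate (\ref{zag-s}).

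There is no serious obstacle here; the argument is essentially arithmetic once Lemma~\ref{small-zags-1} is in hand. The one mildly subtle observation is that the locally anti-Lipschitz hypothesis plays no direct role in the computation itself -- it enters only implicitly, by guaranteeing that $\hat{d}_\tau$ is a genuine (definite) metric so that the balls $B_{\hat{d}_\tau}(p, r)$ really cut out neighborhoods of $p$. The quantitative balance between the factor of $2$ in $B_{\hat{d}_\tau}(p, 2r_p) \subset U$ and the two uses of $r_p$ (once via $\hat{d}_\tau(p,q) < r_p$ and once via $\epsilon < r_p$) is what allows the near-minimizing curve $\beta$ to be confined to the neighborhood $U$, setting the stage for the subsequent local analysis in Section~\ref{secLocalCausality}.
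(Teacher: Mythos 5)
Your proposal is correct and follows essentially the same route as the paper: apply Lemma~\ref{small-zags-1}, confine $\beta$ to $U$ via $\hat{L}_\tau(\beta) < \hat{d}_\tau(p,q)+\epsilon < 2r_p$ (the paper does this by contradiction at $s_{max}$, you do it directly for every $\beta(s)$, which is only a cosmetic difference), and then convert (\ref{zag-1}) into (\ref{zag-s}) using the arclength parametrization (\ref{zag-param}) and $\hat{L}_\tau(\beta)\ge \hat{d}_\tau(p,q)$. Your side remark is also consistent with the paper: the anti-Lipschitz hypothesis is not needed for (\ref{zag-s}) itself, and the paper records the $d_U$ estimate (\ref{zigzag}) within this proof only because it is invoked later in the proof of Theorem~\ref{encodes-causality-locally}.
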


Keep in mind that Example~\ref{ex-missing-ray} satisfies the hypotheses of this lemma
where the neighborhood $U$ can be taken to be the entire space.

\begin{proof}
Taking $p,q$ as in the statement, there is a piecewise causal curve $\beta$ defined in Lemma~\ref{small-zags-1}, and parametrized so that (\ref{zig-param}) and (\ref{zag-param}) hold. Consequently, within each causal interval of $\beta$ that lies 
within $U$, we have
\be \label{zigzag}
|s-s'| \,\hat{L}_\tau(\beta) \,=\, |\tau(\beta(s))-\tau(\beta(s'))| \,\ge\, d_U(\beta(s),\beta(s'))
\ee
by the hypothesis (\ref{eqCGM}) and the fact that $\beta(s)$ and $\beta(s')$ are causally related.

We claim that $\beta$ lies entirely in $U$. Indeed, suppose that $\beta$ leaves $U$ so that
\be
s_{max}\,=\,\sup\, \{\,s\,:\, \beta[0,s]\subset U\, \} \,<\, 1.
\ee 
Let $\beta_0$ be the restriction of $\beta$ to the interval  $[0,s_{max}]$. In this case we have $\hat{L}_{\tau}(\beta)\geq \hat{L}_{\tau}(\beta_0)$ which is straightforward to show by introducing an artificial breaking point at $s=s_{max}$, see (\ref{defn-dhat}). Consequently, we have
\be
\hat{L}_{\tau}(\beta)\,\geq \,\hat{L}_{\tau}(\beta_0) \,\geq\, \dhat_{\tau} (\,p,\, \beta(s_{max})\,) \,\geq\, 2 r_p.
\ee
On the other hand, following the proof of Lemma \ref{small-zags-1}, we have chosen $\beta$ so that $\hat{L}_{\tau}(\beta) < \dhat_\tau (p,q) + \epsilon \leq r_p + \epsilon$, a contradiction.

By Lemma~\ref{small-zags-1},  $\beta$ is past causal on intervals $[s_{2i+1}, s_{2i+2}]$ unless $s_{2i+1} =  s_{2i+2}$ for $i=0, \ldots, k-1$.
Since these intervals lie within $U$, we can apply both (\ref{zigzag}) and (\ref{zag-1}), obtaining
\be\label{zig-s}
\sum_{i=0}^{k-1}\,|s_{2i+2} - s_{2i+1}| \,\hat{L}_\tau(\beta)\,\,=\,\,
\sum_{i=0}^{k-1}\, |\tau(\beta(s_{2i+2}))-\tau(\beta(s_{2i+1}))|\,\, <\,\, \epsilon.
\ee
Since $\hat{d}_\tau(p,q) \le \hat{L}_\tau(\beta)$ we have (\ref{zag-s}).
\end{proof}

\subsection{Optical Functions}\label{secOptical}

In general, an optical function on a spacetime $(N,g)$ is a solution $\omega$ of the Eikonal equation $g(\nabla \omega, \nabla \omega)=0$. Its level sets are null hypersurfaces generated by null geodesic segments. Optical functions are important in the study of spacetimes, in particular they are used in the proof of stability of Minkowski spacetime by Christodoulou and Klainerman \cite{Christodoulou-Klainerman-1989}. Many recent results in mathematical general relativity are proven using double null coordinates that are constructed using incoming and outgoing level sets of an optical function. 

In this paper we apply two coordinate systems
introduced in a 1938 paper by Temple  \cite{Temple-1938} that we will call his \emph{future null coordinate chart } and his   \emph{past null coordinate chart }.  The future null coordinate system is depicted in Figure~\ref{fig:Temple-chart}.  

\begin{figure}[h] %  figure placement: here, top, bottom, or page
   \centering
   \vspace{.2cm}
   \includegraphics[width=12cm]{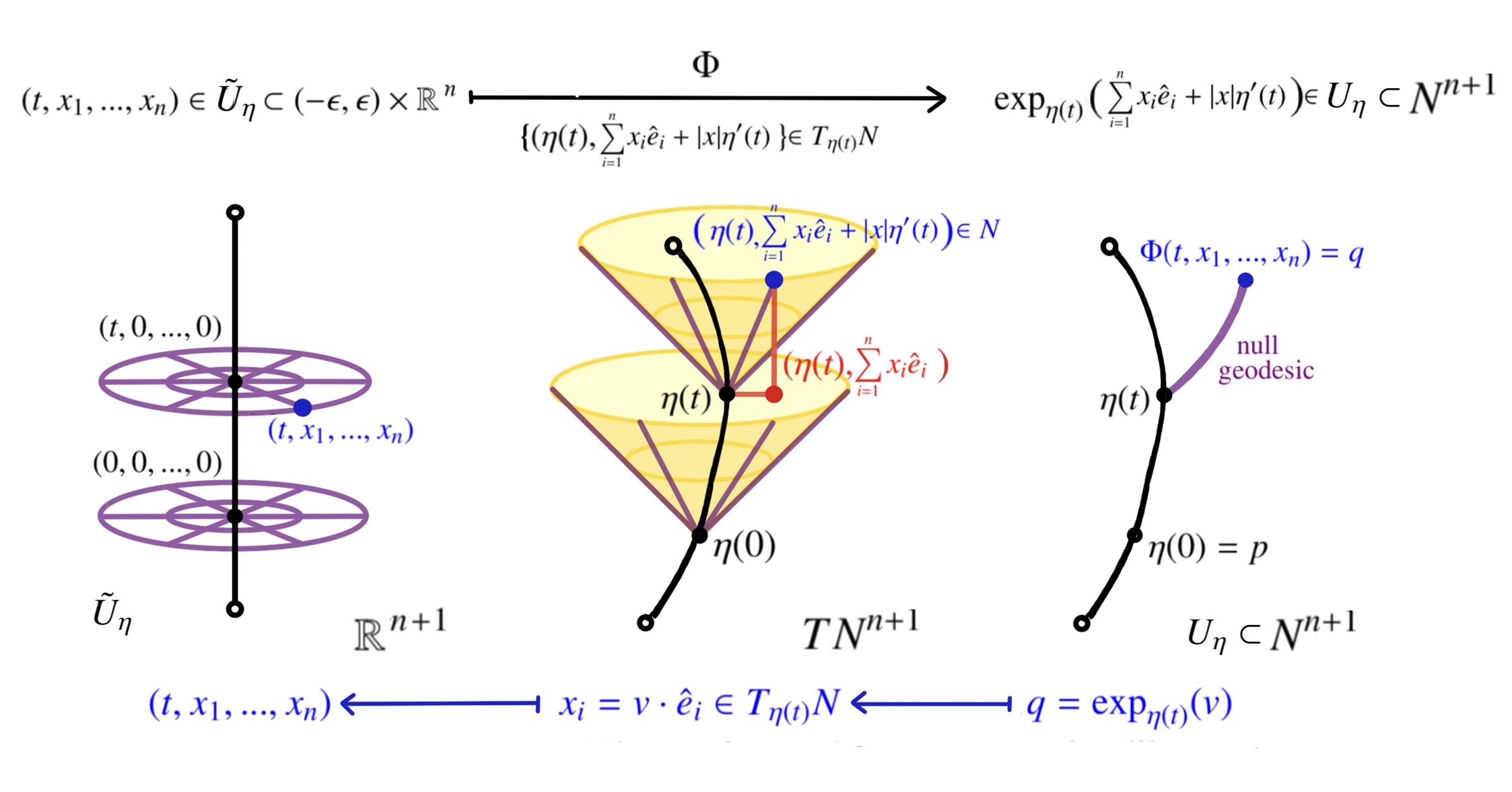} 
   \caption{Temple's future null coordinate chart along a timelike curve.
   }
   \label{fig:Temple-chart}
\end{figure}

\begin{thm}\label{thm-opt} \cite{Temple-1938}
Given any $p$, let $\eta:(-\epsilon, \epsilon) \to N$ be a unit speed future (respectively past) timelike geodesic through $\eta(0)=p$. Let
$\hat{e}_0=\eta'(0)$  and let $\hat{e}_1,...,\hat{e}_n \in T_p N$ be
an orthonormal collection of spacelike vectors such that $\hat{e}_i+\hat{e}_0$ is future (respectively past) null.   We extend this frame
by parallel transport along $\eta$ noting that since $\eta$ is a geodesic, $\eta'(t)=\hat{e}_{0}$ at $\eta(t)$.  Note that for any $x\in{\mathbb R}^m$,
\be
\sum_{i=1}^n x_i \hat{e}_i \,\,+\,\, |x|\, \eta'(t)\,\, \textrm{ is a null vector in } \,\,T_{\eta(t)}N.
\ee
We define a future (respectively past) null chart $\Phi: \tilde{U}_\eta\to U_\eta \subset N$, by
\be
\Phi(t, x_1,...,x_n) = \exp_{\eta(t)}\left(\sum_{i=1}^n x_i \hat{e}_i + |x| \eta'(t)\right)
\ee
which is continuous and invertible on a neighborhood $\tilde{U}_\eta$ of $(-\epsilon, \epsilon) \times \{0\}^n$ and
is smooth away from $\eta$. In this chart, we define the future (respectively past) optical function 
$\omega: U_\eta \to {\mathbb R}$ by
\be
\omega( \Phi(t, x_1,...,x_n) )=t
\ee
and a radial function $\lambda: U_\eta \to {\mathbb R}$ by
\be
\lambda( \Phi(t, x_1,...,x_n) )=\sqrt{x_1^2+\cdots+x_n^2}.
\ee
\end{thm}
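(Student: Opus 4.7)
The plan is to verify the four assertions of the theorem in sequence: that $v(t,x):=\sum_{i=1}^n x_i\hat e_i+|x|\,\eta'(t)$ is null in $T_{\eta(t)}N$, that $\Phi$ is continuous, that $\Phi$ is smooth away from $\eta$, and that $\Phi$ is invertible on a neighborhood $\tilde U_\eta$ of $(-\epsilon,\epsilon)\times\{0\}^n$.

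First I would check nullity. Since the frame $\hat e_0=\eta',\hat e_1,\dots,\hat e_n$ is parallel-transported along the geodesic $\eta$, it remains $g$-orthonormal at each $\eta(t)$, with $g(\hat e_0,\hat e_0)=-1$, $g(\hat e_0,\hat e_i)=0$, and $g(\hat e_i,\hat e_j)=\delta_{ij}$ for $i,j\ge 1$. A direct computation then gives
\[
g(v,v)=\sum_{i=1}^n x_i^2-|x|^2=0,
\]
so each $\Phi(t,x)$ lies on the null geodesic issuing from $\eta(t)$ with initial velocity $v(t,x)$. The regularity claims follow by decomposing $\Phi=\exp\circ V$ where $V(t,x)=(\eta(t),v(t,x))\in TN$. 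The map $V$ is continuous on $(-\epsilon,\epsilon)\times\R^n$ and smooth wherever $|x|>0$ (the only non-smooth factor is $|x|$), while the geodesic exponential map is smooth on its domain of definition. Thus $\Phi$ is continuous globally and smooth off the central curve $\eta$, with $\Phi(t,0)=\exp_{\eta(t)}(0)=\eta(t)$.

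The heart of the proof is local invertibility. I would argue in two stages. Away from $\eta$, at any $(t_0,x_0)$ with $|x_0|>0$ small, I would apply the inverse function theorem after computing $d\Phi$ via variational calculus: the column $d\Phi(\partial_{x_i})$ is the value at affine parameter $1$ of the Jacobi field along the null geodesic $s\mapsto\exp_{\eta(t_0)}(s\,v(t_0,x_0))$ with initial velocity $\hat e_i+(x_i/|x_0|)\,\eta'(t_0)$, while $d\Phi(\partial_t)$ combines infinitesimal translation along $\eta$ with the parallel transport of the frame. In the Minkowski model one verifies directly that the resulting matrix is nonsingular; for general $(N,g)$ the Jacobi fields depend $C^1$-continuously on the data, so after shrinking $\tilde U_\eta$ the differential $d\Phi$ remains nonsingular, giving a local diffeomorphism. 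Near $\eta$ itself, $\Phi$ is at least continuous and satisfies $\Phi(t,0)=\eta(t)$, so no injectivity is lost along the central curve.

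Finally, to promote these local statements to a bona fide inverse on some open neighborhood $\tilde U_\eta$, I would rule out global collisions $\Phi(t,x)=\Phi(t',x')$ with $(t,x)\ne(t',x')$ small. The main obstacle is exactly this step: it amounts to showing that the one-parameter family of null cones $\Phi(\{t\}\times\R^n)$ foliates a neighborhood of $\eta$ without overlap. In normal coordinates at $p$, one straightens $\eta$ to a coordinate axis and reduces to the Minkowski case, where the future (respectively past) null cones from points of a timelike straight line do foliate the adjacent chronological region. The curvature-induced corrections are of order $|x|^2+|t|^2$ and so can be absorbed by shrinking $\tilde U_\eta$, giving injectivity there. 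The optical and radial functions $\omega$ and $\lambda$ are then well defined by inversion of $\Phi$, completing the construction.
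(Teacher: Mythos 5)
The paper itself gives no proof of this statement: it is imported wholesale from Temple's 1938 construction and used as a black box, so your attempt can only be measured against what a complete proof would need. Most of your outline is sound and is the standard route: the nullity computation $g(v,v)=\sum_i x_i^2-|x|^2=0$ using the parallel (hence orthonormal) frame; the decomposition $\Phi=\exp\circ V$, which gives continuity everywhere and smoothness off $\eta$ since $|x|$ is the only non-smooth ingredient; and the inverse function theorem off the axis, where $d\Phi$ is computed from Jacobi fields, is nonsingular in the Minkowski model (the model map $(t,x)\mapsto(t+|x|,x)$ has Jacobian determinant $1$), and stays nonsingular after shrinking by continuous dependence.

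The genuine gap is the step you yourself flag as ``the main obstacle,'' which in your write-up remains an assertion. Local nonsingularity of $d\Phi$ off the axis together with $\Phi(t,0)=\eta(t)$ does not give invertibility on a neighborhood of the axis: $\Phi$ is not differentiable at $x=0$, so the inverse function theorem is unavailable exactly where injectivity is delicate, and you must also exclude collisions of the form $\Phi(t,x)=\eta(t')$ with $x\neq 0$ and collisions between points on cones issued from nearby $\eta(t)\neq\eta(t')$; neither follows from ``no injectivity is lost along the central curve,'' nor from the unquantified claim that curvature corrections ``can be absorbed by shrinking.'' To close it, make the Minkowski comparison quantitative: work in Fermi coordinates along $\eta$, in which the parallel frame is the coordinate frame and the model map $\Phi_0(t,x)=(t+|x|,x)$ is bi-Lipschitz with Lipschitz inverse; write $\Phi=\Phi_0+E$, where $E(t,x)=Q(t,v(t,x))$ with $Q$ smooth and vanishing to second order in $v$ uniformly in $t$ (Christoffel symbols vanish along $\eta$), and $v(t,x)$ Lipschitz in $(t,x)$. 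Then $\operatorname{Lip}(E)\to 0$ on the box $\{|t|<\delta,\,|x|<\delta\}$ as $\delta\to 0$, so $|\Phi(a)-\Phi(b)|\geq \bigl(\operatorname{Lip}(\Phi_0^{-1})^{-1}-\operatorname{Lip}(E)\bigr)|a-b|>0$ for $a\neq b$, giving injectivity on a sufficiently small $\tilde U_\eta$, including across and onto the axis. Finally, to get a genuine chart (continuity of $\Phi^{-1}$, hence of $\omega$ and $\lambda$), invoke invariance of domain: a continuous injection of an open subset of $\mathbb{R}^{n+1}$ into the $(n+1)$-dimensional manifold $N$ is a homeomorphism onto an open image. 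With these additions your outline becomes a complete proof.
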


Given a unit speed future timelike geodesic $\eta:(-\epsilon, \epsilon) \to N$ through $\eta(0)=p$, we can reverse the parametrization and define both the future and past null charts, 
\be
\Phi_+: \tilde{U}_+ \to U_\eta \,\,\,\textrm{ and }\,\,\,
\Phi_-: \tilde{U}_- \to U_\eta \,\,\,\textrm{ respectively, }
\ee 
together onto the same domain, $U_\eta$, after possibly shrinking their domains.   We let
\be\label{optical-past-future}
\omega_+: U_\eta \to {\mathbb R} \,\,\,\textrm{ and }\,\,\, \omega_-: U_\eta \to {\mathbb R}
\ee
be the future optical function and past optical functions respectively.

\begin{example}
Consider the standard Minkowski spacetime $\mathbb{R}^{n,1}$ with coordinates $(x_0, x_1,\ldots, x_n)$ and the metric $g_{Mink}=-dx_0^2+dx_1^2+\ldots+dx_n^2$. Let $\eta(t)=(t,0,0,0)$, $t\in \mathbb{R}$. Then, in the notations of Theorem \ref{thm-opt}, we have
\be 
\lambda  = r := \sqrt{x_1^2+ \ldots  + x_n^2}
\ee
and the future optical function is given by
\be
\omega_+ (x_0, x_1, \ldots, x_n)=x_0 - r.
\ee
The level sets of $\omega_+$ ,
\be
\omega_+^{-1}(c)=\left\{(x_0, x_1,\ldots, x_n)\,:\, x_0 = c + \sqrt{x_1^2+\ldots + x_n^2} \, \right\} ,
\ee
are the future null cones of $(c,0,\ldots,0)$. 
Note that $\omega_+$ is nonnegative on $J_+((0,0,\ldots,0))$ and negative elsewhere.  %with $d\omega= dx_0 -dr$ so it is straightforward to check that $d\omega(v)>0$ for future timelike vectors $v$. 
Taking instead $\eta(t)=(-t,0,0,0)$, $t\in \mathbb{R}$,  we find that the past optical function is 
\be
\omega_- (x_0, x_1, \ldots, x_n)=- x_0 - r.
\ee
The level sets of $\omega_-$,
\be
\omega_-^{-1}(c)=\left\{(x_0, x_1,\ldots, x_n)\,:\, x_0 = -c - \sqrt{x_1^2+\ldots + x_n^2} \, \right\} ,
\ee
are the past null cones of $(-c,0,\ldots,0)$.  Clearly,  $\omega_-$ is nonnegative on $J_-((0,0,\ldots,0))$ and negative elsewhere. 
Note that both $\omega_+$ and $\omega_-$ are differentiable away from the $x_0$-axis.  
\end{example}

%Just as in the above example, when defining Temple's past and future null coordinate charts we may use the same geodesic $\eta$ up to reversing its direction.  For convenience, in the following we will assume that for any $p\in N$ both charts have the same domain $U_\eta$ where $\eta$ is a unit speed timelike geodesic through $\eta(0)=p$.

We will also use the following key property of these charts that we have discovered: 

\begin{lem} \label{omega-q'-q}
Given $p\in N$ let $U_\eta$ be the image of Temple's past and future null coordinate charts centered at $p$.  Let $\omega_+$ be the future optical function. For any $q \in U_\eta$ we have 
\be
\omega_+(q)\ge 0 \Rightarrow q\in J_+(p).
\ee
Furthermore, if  $q, q'\in U_\eta$ then 
\be
q'\in J_+(q) \Rightarrow \omega_+(q') \geq \omega_+(q) .
 \ee
Similarly, if $\omega_-$ is the past optical function then 
\be
\omega_-(q)\ge 0 \Rightarrow q\in J_-(p),
\ee
and 
\be
q'\in J_-(q) \Rightarrow \omega_-(q') \geq \omega_-(q) 
 \ee
 whenever $q, q'\in U_\eta$.
\end{lem}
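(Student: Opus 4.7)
The plan is to prove the two implications separately, and to only write out the argument for $\omega_+$; the statements for $\omega_-$ follow by reversing the time orientation of $\eta$. For the first implication, suppose $\omega_+(q) = t \geq 0$. By the definition of Temple's chart, $q = \Phi(t,x) = \exp_{\eta(t)}\bigl(\sum_i x_i \hat e_i + |x|\,\eta'(t)\bigr)$ for some $x \in \mathbb{R}^n$, and the argument of the exponential is by construction a future null vector at $\eta(t)$. Hence $s \mapsto \exp_{\eta(t)}(s(\sum_i x_i \hat e_i + |x|\,\eta'(t)))$, $s \in [0,1]$, is a future null geodesic from $\eta(t)$ to $q$, placing $q \in J_+(\eta(t))$. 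Since $\eta|_{[0,t]}$ is a future timelike curve from $p = \eta(0)$ to $\eta(t)$, transitivity of the causal relation gives $q \in J_+(p)$.

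For the second implication, the strategy is to show that $\omega_+$ is non-decreasing along any future causal curve in $U_\eta$. Off the singular ray $\eta$, Temple's map $\Phi$ is a local diffeomorphism, so $\omega_+$ is smooth on $U_\eta \setminus \eta$, and each level set $\omega_+^{-1}(t_0)$ is the image under $\exp_{\eta(t_0)}$ of the future null cone in $T_{\eta(t_0)}N$, i.e.\ a smooth null hypersurface ruled by future null geodesics. This forces the eikonal equation $g(\nabla \omega_+, \nabla \omega_+) = 0$ on $U_\eta \setminus \eta$, so $\nabla \omega_+$ is null. Because $\omega_+(\eta(s)) = s$ yields $d\omega_+(\eta'(s)) = 1 > 0$ while $\eta'$ is future timelike, the gradient $\nabla \omega_+$ must be past null (two future causal vectors pair non-positively in signature $(-,+,\ldots,+)$). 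Consequently, for every future causal $v \in TN$ on $U_\eta \setminus \eta$,
\[
d\omega_+(v) \;=\; g(\nabla \omega_+, v) \;\geq\; 0,
\]
because $-\nabla \omega_+$ is then future null. Hence $\omega_+ \circ \gamma$ is non-decreasing for any future causal curve $\gamma$ contained in $U_\eta \setminus \eta$.

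To conclude, given $q, q' \in U_\eta$ with $q' \in J_+(q)$, I would shrink $U_\eta$ to lie inside a convex normal neighborhood of $p$ so that some future causal curve $\gamma \colon [0,1] \to U_\eta$ connects $q$ to $q'$ within the chart. If $\gamma$ avoids $\eta$, the monotonicity above gives $\omega_+(q') \geq \omega_+(q)$ at once. The main obstacle is when $\gamma$ meets the singular ray $\eta$, where $\omega_+$ is only continuous (Lipschitz, as one sees from the Minkowski model computation $\omega_+ = x_0 - r$). Since $\eta$ has codimension $n \geq 1$ in $U_\eta$, my plan is to approximate $\gamma$ by future timelike curves that are perturbed to miss $\eta$ entirely, apply the smooth monotonicity to the perturbations, and pass to the continuous limit using continuity of $\omega_+$; on any subinterval where $\gamma$ coincides with $\eta$, the identity $\omega_+|_\eta(s) = s$ yields monotonicity directly. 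This final step, rigorously propagating monotonicity across the singular ray, is the most delicate part of the argument.
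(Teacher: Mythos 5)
Your first implication and the core of your second are essentially the paper's own argument: the paper likewise runs from $q$ back along the null generator to $\eta(\omega_+(q))$ and down $\eta$ to $p$, and proves monotonicity by integrating $\tfrac{d}{d\sigma}\omega_+(\gamma(\sigma))=g(\nabla\omega_+,\gamma'(\sigma))\ge 0$ along a causal curve, using the eikonal equation and the fact that $-\nabla\omega_+$ is future null (their ``reverse Cauchy--Schwarz'' is your ``two future causal vectors pair non-positively''). The genuine gap is precisely the step you flag as delicate, and the repair you sketch does not work as stated. Avoiding $\eta$ by perturbation needs $\eta$ to have codimension at least two, i.e.\ $n\ge 2$, whereas the lemma feeds into Theorem~\ref{encodes-causality-locally} for all $n\ge 1$: in a $1+1$-dimensional spacetime a causal curve joining points on opposite sides of the timelike geodesic $\eta$ must cross it, and no perturbation with fixed endpoints can avoid it. Even for $n\ge 2$ you would still need the perturbed curves to remain causal, to stay inside $U_\eta$, and to have (or limit to) the endpoints $q,q'$, with endpoints lying on $\eta$ treated separately; none of this is set up. A further small imprecision: you evaluate $d\omega_+(\eta'(s))$ on $\eta$, where $\omega_+$ is not differentiable; the correct statement is that off $\eta$ one has $d\omega_+(\partial_t)=1$ with $\partial_t$ future timelike after shrinking $U_\eta$ (this is the vector field $X$ of Lemma~\ref{g_R}), which yields the same conclusion that $\nabla\omega_+$ is past null.

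The paper's handling of the crossings is much simpler and you can adopt it: take the connecting causal curve to be piecewise geodesic, so that (in a convex neighborhood, where two distinct geodesics meet at most once) it intersects $\eta$ at most finitely many times unless a piece runs along $\eta$, which is harmless since $\omega_+=t$ there; then either integrate as the paper does, or split $[0,1]$ at the finitely many crossing parameters, apply your off-$\eta$ monotonicity on each subinterval, and use continuity of $\omega_+$ at the crossing points. Be aware that ``monotone off a closed null set plus continuity'' is not by itself enough (a decreasing devil's-staircase type function is non-decreasing off a closed set of measure zero), so reducing to finitely many crossings, or invoking the Lipschitz bound of Lemma~\ref{omega-Lip} to get absolute continuity of $\omega_+\circ\gamma$, is genuinely needed to close your argument.
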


\begin{proof}
We present the proof in the case when $\omega = \omega_+$ is a future optical function, the necessary modifications in the case of a past optical function $\omega = \omega_-$ are straightforward: one essentially needs to replace "future" by "past" in the arguments below.  Suppose that $\omega(q) \geq 0$. If  $q\in \eta$ then $q=\eta(t)$ for $t=\omega(q)\geq 0$. Since $\eta$ is timelike future directed it follows that $q\in J_+(p)$. If $q\notin \eta$ then there exists a future lightlike geodesic $\gamma_q$ such that $\gamma_q(0)=\eta (t)$ where $t=\omega(q)\geq 0$ and $\gamma_q (1) = q$. Consequently, $q\in J^+(\eta(t))$ and $\eta(t) \in J^+(p)$ which implies that $q\in J^+(p)$. 

Let $\gamma$ be a piecewise smooth future causal geodesic such that $\gamma(0)=q$ and $\gamma(1)= q'$. Without loss of generality we may assume  that $\gamma$ intersects $\eta$ finitely many times. Recalling that $\omega$ is differentiable away from $\eta$ with $g(\nabla \omega,\nabla \omega)=0$, we get 
\begin{eqnarray}
\omega(q')-\omega(q) & =&  \int_0^1  \tfrac{d}{d\sigma} \omega(\gamma(
\sigma))  \, d\sigma \\
&  = & \int_0^1 g( \nabla \omega (\gamma(\sigma)), \gamma'(\sigma)) 
 \, d\sigma \\
&=& \int_0^1 -g( \nabla (-\omega) (\gamma(\sigma)), \gamma'(\sigma)) 
\, d\sigma\\
&\geq& \int_0^1 | \nabla (-\omega) (\gamma(\sigma))|_g | \gamma'(\sigma)|_g
 \, d\sigma\\
&=& 0.
\end{eqnarray}
Here we have used the fact that $\nabla (-\omega)$ is future null and the reverse Cauchy-Schwartz inequality (see e.g. O'Neill \cite[Chapter 5]{O'Neill-text}).
Thus $\omega(q')\geq\omega(q)$ as claimed.
\end{proof}

Using either of Temple's coordinate charts we can define a Riemannian metric on the neighborhood covered by the chart:

\begin{lem} \label{g_R}
Let $\Phi: \tilde{U}_\eta \to U_\eta$ be the future or past null coordinate chart as in Theorem \ref{thm-opt}, with the respective optical function $\omega$. We can define a continuous Riemannian metric on $U_\eta$ by
\be \label{eq-g_R}
g_R(V,W)=\frac{2}{|g(X,X)|} g(X,V)g(X,W) + g(V,W)
\ee
where $X$ is a continuous vector field such that $X=\eta'(t)$ along $\eta$
and $X= \partial_t$ where $t$ is as in Theorem \ref{thm-opt} outside $\eta$. Furthermore, if we define the Riemannian
gradient $\nabla_R \omega$ of the optical function $\omega$ by
\be
g_R(\nabla_R \omega, V)=V(\omega) \quad \text{ for any vector field } V \text{ on } U_\eta
\ee
then
\be\label{riem-grad}
|\nabla_R \omega|_{g_R}=\sqrt{g_R(\nabla_R \omega,\nabla_R \omega)} = \sqrt{2 |g(X,X)|^{-1}}<2
\ee
away from $\eta$, up to shrinking $U_\eta$ if necessary. 

%The same holds when $\eta$ be  a unit speed past geodesic through $p$ and let $\Phi: U_\eta \to \Phi(U_\eta)$ be the associated past null coordinate chart as in Theorem \ref{thm-opt}, with the past optical function $\omega$, if we set $X=-\eta'(t)$ along $\eta$ and $X= -\partial_t$ outside $\eta$.
\end{lem}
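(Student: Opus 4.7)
The plan is to establish the claim in two stages: first I check that $g_R$ as defined in \eqref{eq-g_R} is continuous and positive-definite (hence a Riemannian metric) on $U_\eta$ after possibly shrinking, and second I invert $g_R$ explicitly and use the Eikonal property of $\omega$ to compute $|\nabla_R \omega|^2_{g_R}$. The vector field $X$ is continuous across $\eta$ because along $\eta$ we have $X = \eta'$ while away from $\eta$ we have $X = \partial_t$, and these definitions match at $\eta$ since $\Phi(t, 0, \ldots, 0) = \eta(t)$ forces $\partial_t \Phi|_{(t,0)} = \eta'(t)$. Because $g(X, X) = -1$ on $\eta$, continuity allows me to shrink $U_\eta$ so that $|g(X, X)| > 1/2$ throughout, which in turn makes the coefficient $2/|g(X, X)|$ continuous and bounded, giving continuity of $g_R$.

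For positive-definiteness, given any $V \in T_q N$ with $q \in U_\eta$, I decompose $V = aX + V^\perp$ where $V^\perp$ is $g$-orthogonal to $X$, so that $g(X, V) = a\, g(X, X) = -a |g(X, X)|$. Substituting into \eqref{eq-g_R} and simplifying, I obtain
\begin{equation*}
g_R(V, V) \,=\, a^2 |g(X, X)| + g(V^\perp, V^\perp).
\end{equation*}
Since $V^\perp$ is $g$-orthogonal to the timelike vector $X$ it is spacelike, so both summands are non-negative and vanish only when $V = 0$.

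To compute the gradient norm, I invert $g_R = g + \tfrac{2}{|g(X, X)|}\, X^\flat \otimes X^\flat$ via the Sherman--Morrison rank-one update to obtain $g_R^{-1} = g^{-1} + \tfrac{2}{|g(X, X)|}\, X \otimes X$; this is verified by direct contraction using the identity $\tfrac{2}{|g(X, X)|}\, g(X, X) = -2$. Since $\omega = t$ in Temple's chart and its level sets $\{\omega = t_0\}$ are null hypersurfaces generated by the null geodesics built in Theorem \ref{thm-opt}, $d\omega$ is a null covector and $g^{-1}(d\omega, d\omega) = 0$. Away from $\eta$ we have $X = \partial_t$, so $X(\omega) = \partial_t\, t = 1$, and the defining property of $\nabla_R \omega$ yields
\begin{equation*}
|\nabla_R \omega|^2_{g_R} \,=\, g_R^{-1}(d\omega, d\omega) \,=\, 0 \,+\, \tfrac{2}{|g(X, X)|}\, (X(\omega))^2 \,=\, \tfrac{2}{|g(X, X)|},
\end{equation*}
and the bound $|\nabla_R \omega|_{g_R} < 2$ then follows from $|g(X, X)| > 1/2$ on $U_\eta$.

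The main subtlety I anticipate is the low regularity along $\eta$: the chart $\Phi$ is only smooth off $\eta$, so I need to argue carefully that $X$, $g(X, X)$, and the Eikonal identity $g^{-1}(d\omega, d\omega) = 0$ all extend or persist continuously in a way compatible with the algebraic manipulations above. Once these chart-level properties are in hand, the inversion of $g_R$ and the computation of $|\nabla_R \omega|_{g_R}^2$ reduce to elementary tensor algebra.
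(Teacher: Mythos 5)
Your algebra is correct: the decomposition $V = aX + V^\perp$ giving $g_R(V,V) = a^2|g(X,X)| + g(V^\perp,V^\perp)$ is essentially the paper's orthonormal-frame check of positive-definiteness, and your Sherman--Morrison inversion $g_R^{-1} = g^{-1} + \tfrac{2}{|g(X,X)|}\,X\otimes X$, combined with the Eikonal equation and $X(\omega)=\partial_t\,t=1$ off $\eta$, recovers $|\nabla_R\omega|^2_{g_R} = 2/|g(X,X)|$ by a slightly different but equivalent route (the paper instead shows the $g$-orthonormal frame $\{X/|g(X,X)|^{1/2},e_1,\dots,e_n\}$ is $g_R$-orthonormal and sums squares). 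The genuine gap is the continuity of $X$ across $\eta$, which is exactly what the paper singles out as the challenge and devotes most of its proof to. Your one-line justification --- that $\Phi(t,0,\dots,0)=\eta(t)$ ``forces'' $\partial_t\Phi|_{(t,0)}=\eta'(t)$ --- only says that the restriction of $\Phi$ to the axis has velocity $\eta'(t)$; it does not show that the coordinate field $\partial_t$, which is defined only off $\eta$ (Theorem \ref{thm-opt} gives smoothness of $\Phi$ only away from $\eta$ and mere continuity at the axis, much like polar coordinates at the origin), converges to $\eta'(t)$ as one approaches $\eta$. Without that, $g_R$ is not known to extend continuously to $\eta$, and --- crucially for \eqref{riem-grad} --- you cannot shrink $U_\eta$ to get $|g(X,X)|>1/2$, since that bound rests on $g(X,X)\to-1$ near $\eta$, i.e.\ precisely on continuity of $X$ at $\eta$. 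You flag this as ``the main subtlety I anticipate'' but never supply the argument, so the proof is incomplete at its hardest point.

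The paper closes this gap with a Jacobi field argument: along each null generator $\gamma_{(t,u)}(\lambda)=\exp_{\eta(t)}\bigl(\lambda(\sum_{i} u_i\hat{e}_i+\eta'(t))\bigr)$, the field $\partial_t$ is the variation field of the family $t\mapsto\gamma_{(t,u)}$, hence a Jacobi field with initial data $J_{(t,u)}(0)=\eta'(t)$ and $\nabla_{\gamma_{(t,u)}'(0)}J_{(t,u)}(0)=0$; Jacobi fields are continuous along their geodesic down to $\lambda=0$, and by continuous dependence of solutions of the Jacobi equation on the base point $\eta(t)$ and the direction $u$, the resulting field $X$ is continuous on $U_\eta$ up to $\eta$. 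You would need an argument of this type (or an independent proof that $\Phi$ is $C^1$ up to the axis) to justify the continuity of $X$, of $g_R$, and the uniform lower bound on $|g(X,X)|$ that your final inequality uses.
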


Note that in this lemma, the Riemannian metric, $g_R^+$, defined by the future null chart does not necessarily agree with the Riemannian metric, $g_R^-$, defined by the past null chart.   We will only use one at a time anyway.

\begin{proof} 
Given a spacetime $(N,g)$ and a continuous timelike vector field $X$, it is straightforward to check using an orthonormal frame 
\be\label{eqONframe}
\left\{e_0=\tfrac{X}{\,\,|g(X,X)|^{1/2}}, e_1, \ldots, e_n\right\} 
\ee
that (\ref{eq-g_R}) defines a continuous Riemannian metric on $N$. The challenge here is to prove that the vector field $X$ as in the formulation of the theorem is continuous on $U_\eta$, even though the optical function $\omega$ is not differentiable along $\eta$ and the vector field $\partial_t$ is a priori not defined on $\eta$.   

We claim that $X$ is continuous along null geodesics emanating
from $\eta$, up to $\eta$.  Fix a unit vector $u\in {\mathbb R}^n$, and consider a family of null geodesics $\{\gamma_{(t,u)}\}_t$ where $\gamma_{(t,u)}$ is a null geodesic starting at $\eta(t)$ and defined for $\lambda \geq 0$ sufficiently small by
\be\label{variation}
\gamma_{(t,u)}(\lambda)=\exp_{\eta(t)}\left(\lambda \left(\sum_{i=1}^n u_i \hat{e}_i +  \eta'(t)\right) \right).
\ee
Note that when $\lambda \neq 0$ this can be written as $\gamma_{(t,u)}(\lambda)=\Phi(t, \lambda u)$. Observe that taking a variation in $t$, we obtain a Jacobi field $J_{(t,u)}(\lambda)$ such that
\be
J_{(t,u)}(0)=\eta'(t)=X \textrm{ and } J_{(t,u)}(\lambda) = \partial_t=X \textrm{ for } \lambda>0.
\ee
Since Jacobi fields are continuous along their geodesic, 
we conclude that $X$ is continuous along $\gamma_{(t,u)}(\lambda)$
including $\lambda=0$.  

We will now apply the fundamental theorem of ordinary differential equations, to show that $X$ is continuous in all directions. Observe that the initial value $J_{(t,u)}(0) = \eta'(t)$ and the initial covariant derivative 
\be
\nabla_{\gamma_{(t,u)}'(0)} J_{(t,u)}(0) = 0
\ee
are smooth along $\eta(t)$, thus the Jacobi fields vary continuously in the $t$ direction for any fixed $u$
and for $\lambda\ge 0$. If we vary $u$ then we are just rotating our initial direction within the null cone, and the variation of the null geodesics is smooth and the Jacobi fields vary continuously as well. 

Finally, we verify \eqref{riem-grad}. Let $\{e_0, e_1, \ldots, e_n\}$ be an orthonormal frame for the Lorentzian metric $g$ on $U_\eta$ as in \eqref{eqONframe}, in particular $e_0$ is timelike, and $e_1, \ldots, e_n$ are spacelike. In what follows, we work on $U_\eta \setminus \eta$. Using \eqref{eq-g_R} it is straightforward to verify that $\{e_0, e_1, \ldots, e_n\}$ is also an orthonormal frame for the Riemannian metric $g_R$, so that 
\be
\nabla_R \omega = e_0 (\omega) e_0 + e_1 (\omega) e_1 + \ldots + e_n (\omega) e_n 
\ee
and 
\be
g_R(\nabla_R \omega, \nabla_R \omega) = (e_0(w))^2 + (e_1(w))^2 + \ldots + (e_n(w))^2.
\ee 
Since $\omega$ satisfies the Eikonal Equation $g(\nabla \omega, \nabla \omega)=0$ away from $\eta$ we have 
\be
(e_0(\omega))^2 = (e_1(\omega))^2 + \ldots + (e_n(\omega))^2,
\ee 
so $g_R(\nabla_R \omega, \nabla_R \omega) = 2 e_0 (\omega)^2$. Recalling that $e_0 =\frac{X}{|g(X,X)|^{1/2}}$ we obtain
\be
g_R(\nabla_R \omega, \nabla_R \omega) =\tfrac{\,\,2X(\omega)^2}{|g(X,X)|} =  \tfrac{2}{|g(X,X)|}
\ee
and \eqref{riem-grad} follows, since by the continuity of $X$  we may assume that $|g(X,X)|>1/2$ in $U_\eta$, as $g(X,X)=-1$ on $\eta$.
\end{proof}

\begin{rmrk}
Note that in the proof above we have only claimed that $X$ is continuous and not smooth.  This is because we never pass through 
$\eta$, and only check that the Jacobi fields are varying continuously for $\lambda \ge 0$.   Even though we could show
the Jacobi fields are differentiably continuous for $\lambda \ge 0$, that does not prove they are differentiable on $\eta$. 
Perhaps they are or perhaps they are not.
\end{rmrk}

\begin{lem} \label{omega-Lip} 
If $g_R$ is the continuous Riemannian metric on $U_\eta$ as defined in Lemma~\ref{g_R} using the future (respectively past) null coordinate chart, and $d_{g_R}$ is the Riemannian distance with respect to $g_R$ defined by 
\be
d_{g_R}(q,q') = \inf\{ L_{g_R}(C)\, :\, C \text{ piecewise smooth curve from } q \text{ to } q'\, \}
\ee
then
\be
\sup\left\{ \frac{|\omega(q)-\omega(q')|}{d_{g_R}(q,q')} \,: \, q\neq q' \in U_\eta \, \right\} < 2.
\ee
where $\omega$ is the future (respectively past) optical function.
\end{lem}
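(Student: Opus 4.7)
The plan is to show that $\omega$ is Lipschitz on $U_\eta$ with respect to $d_{g_R}$ with a constant strictly less than $2$, which immediately gives the claimed bound on the supremum. The argument proceeds in two parts: first I would establish a uniform bound on the $g_R$-gradient of $\omega$ away from $\eta$, then I would promote it to a global bound by passing to Temple's chart, where the optical function becomes smooth.

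For the first part, Lemma \ref{g_R} already yields $|\nabla_R \omega|_{g_R}^2 = 2/|g(X,X)|$ on $U_\eta \setminus \eta$. Along $\eta$ the vector field $X = \eta'$ is unit timelike, so $|g(X,X)| = 1$ there. Since $X$ is continuous on all of $U_\eta$ by Lemma \ref{g_R}, I would shrink $U_\eta$ to ensure that $|g(X,X)| \geq 1 - \delta$ throughout $U_\eta$ for some fixed $\delta \in (0,1/2)$. This yields the uniform bound
\be
|\nabla_R \omega|_{g_R} \leq L := \sqrt{\tfrac{2}{1-\delta}} < 2 \quad \text{on } U_\eta \setminus \eta.
\ee

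For the second part, I would pass to Temple's chart $\Phi \colon \tilde U_\eta \to U_\eta$, where $\omega \circ \Phi (t, x) = t$ is smooth on all of $\tilde U_\eta$. The pulled-back metric $\tilde g_R := \Phi^* g_R$ is a continuous Riemannian metric on $\tilde U_\eta$ (since $g_R$ is a continuous Riemannian metric and $\Phi$ is a homeomorphism), so the function $|\nabla_{\tilde g_R} t|_{\tilde g_R}$ is continuous on $\tilde U_\eta$. Since it equals $|\nabla_R \omega|_{g_R} \circ \Phi$ away from $\{x = 0\}$, by continuity the bound $\leq L$ extends to all of $\tilde U_\eta$. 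Then for any piecewise smooth $C \colon [0,1] \to U_\eta$ from $q$ to $q'$, the pullback $\tilde C := \Phi^{-1} \circ C$ is an absolutely continuous curve in $\tilde U_\eta$ with $L_{\tilde g_R}(\tilde C) = L_{g_R}(C)$, and the fundamental theorem of calculus applied to the smooth function $t$ gives
\be
|\omega(q) - \omega(q')| = |t(\tilde C(1)) - t(\tilde C(0))| \leq L \int_0^1 |\tilde C'(s)|_{\tilde g_R}\, ds = L \cdot L_{g_R}(C).
\ee
Taking the infimum over $C$ yields $|\omega(q) - \omega(q')| \leq L \cdot d_{g_R}(q, q')$, and hence the supremum in the statement is bounded by $L < 2$.

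The delicate point will be the behavior exactly on $\eta$, where $\Phi$ fails to be smooth and the formula for $|\nabla_R \omega|_{g_R}$ from Lemma \ref{g_R} is only valid on $U_\eta \setminus \eta$. The chart strategy handles this cleanly by shifting the singularity from the optical function (which becomes the smooth coordinate $t$) onto the Riemannian metric (which remains merely continuous); continuity of $\tilde g_R$ is precisely enough both to extend the off-$\eta$ gradient bound by continuity and to justify the length-integral estimate for the absolutely continuous curve $\tilde C$.
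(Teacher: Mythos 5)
There is a genuine gap in the second part of your argument. You claim that $\tilde g_R := \Phi^* g_R$ is a continuous Riemannian metric on $\tilde U_\eta$ ``since $g_R$ is continuous and $\Phi$ is a homeomorphism.'' Pulling back a metric tensor requires $\Phi$ to be differentiable, and by Theorem~\ref{thm-opt} the chart $\Phi$ is only continuous along $\eta$ (the term $|x|\,\eta'(t)$ in its definition is not differentiable at $x=0$), so $\Phi^*g_R$ is simply not defined on the axis $\{x=0\}$; moreover the pullback defined off the axis does not extend continuously to it. Already in the Minkowski model, $\Phi(t,x)=(t+|x|,x)$ and $g_R$ is the Euclidean metric, so away from the axis $\Phi^*g_R = dt^2 + 2\,dt\,d|x| + d|x|^2 + \textstyle\sum_i dx_i^2$, whose coefficients involve $x_i/|x|$ and have no limit as $x\to 0$. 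Hence the two steps that your chart strategy rests on --- extending the gradient bound to all of $\tilde U_\eta$ ``by continuity,'' and the identity $L_{\tilde g_R}(\tilde C)=L_{g_R}(C)$ for curves that meet $\eta$ --- are unjustified exactly at the place where the difficulty lives, namely on and near the axis. (Your first part, shrinking $U_\eta$ so that $|g(X,X)|\ge 1-\delta$ and getting a uniform constant $L=\sqrt{2/(1-\delta)}<2$ off $\eta$, is fine and in fact sharpens \eqref{riem-grad}.)

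The paper avoids this issue differently: it takes a sequence of piecewise smooth curves $\gamma_i$ from $q$ to $q'$ with $L_{g_R}(\gamma_i)\to d_{g_R}(q,q')$, arranges (without loss of generality) that each $\gamma_i$ meets the image of $\eta$ at most finitely many times, and then integrates $|\tfrac{d}{d\sigma}\omega(\gamma_i(\sigma))|$ using the bound on $|\nabla_R\omega|_{g_R}$ valid away from $\eta$, together with continuity of $\omega$ across the finitely many crossing points, to get $|\omega(q)-\omega(q')|\le 2\,L_{g_R}(\gamma_i)$ and then pass to the limit. If you want to keep your chart-based route, you would need to replace the false continuity claim by an honest analysis of curves touching $\eta$ (for instance, perturbing them off $\eta$ with arbitrarily small increase of $g_R$-length, or estimating the portion lying on $\eta$ directly, where $\omega$ advances at unit rate while the $g_R$-speed of $\eta$ is $1$); as written, the proof does not go through.
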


\begin{proof} 
Let $\gamma_i:[0,1]\to U_\eta$  be (piecewise smooth) curves from $\gamma_i(0)=q$ 
to $\gamma_i(1)=q'$ such that
\be
\lim_{i\to \infty} L_{g_R}(\gamma_i) \to d_{g_R}(q,q').
\ee
We can assume that $\gamma_i$ hits the image of $\eta$ at most finitely many 
times, so that
$\omega$ is differentiable along $\gamma_i$ away from those times, with $|\nabla_R \omega|_{
g_R} \leq  2$ by Lemma \ref{g_R}.    Then 
\begin{eqnarray}
|\omega(q)-\omega(q')| & \le &  \int_0^1 | \tfrac{d}{d\sigma} \omega(\gamma_i(
\sigma)) | \, d\sigma \\
&  = & \int_0^1 | g_R( \nabla_R \omega (\gamma_i(\sigma)), \gamma_i'(\sigma)) 
| \, d\sigma \\
&\le & \int_0^1 |\nabla_R \omega (\gamma_i(\sigma))|_{g_R} |\gamma_i'(\sigma))
 |_{g_R} \, d\sigma \\
&\le & 2 \int_0^1 |\gamma_i'(\sigma)) |_{g_R} \, d\sigma \\
&=& 2 L_{g_R}(\gamma_i).
\end{eqnarray}
Taking the limit as $i\to \infty$ we have
\be
\frac{|\omega(q)-\omega(q')|}{d_{g_R}(q,q')} \le 2 \text{ for all } q\neq q' \in U_\eta.
\ee
\end{proof}

\subsection{Proof that the null distance encodes causality locally}\label{secFinalLocalCausality} 

In this section we prove Theorem~\ref{encodes-causality-locally}.

\begin{proof} 
Let $p \in N$.  We will first prove that there is a neighborhood $W$ of $p$ such that for all $q\in W$ we have
\be
\hat{d}_\tau(p,q)=\tau(q)-\tau(p) \implies q \textrm{ is in  the causal future of } p.
\ee
For this,  take any timelike future unit speed geodesic $\eta$ through $p$ and define a neighborhood $U_\eta\subset U$ and the future optical function $\omega: U_\eta\to {\mathbb R}$ as in Theorem~\ref{thm-opt}.
Let $g_R$ be the continuous Riemannian metric on $U_\eta$ as in Lemma~\ref{g_R}. Further, let $r_p$ be such that $B_{\dhat_\tau}(p, 2r_p) \subset U_\eta$ and set $U_p = B_{\dhat_\tau} (p,r_p/2)$. Then for all $p' \in U_p$ we have
\be
U_p= B_{\dhat_\tau} (p,r_p/2) \subset B_{\dhat_\tau} (p',r_p) \subset  B_{\dhat_\tau} (p, 2 r_p) \subset U_\eta.
\ee
Consequently, by choosing $r_{p'} = r_p$ we can ensure that 
\be
\text{ for any } p' \in U_p \textrm{ we have } U_p\subset W_{p'} \textrm{ of Lemma~\ref{small-zags-2}}.
\ee
%By local compactness, we can choose a neighborhood $U_p\subset U_\eta$ about $p$ 
%so small that 
%\be
%\text{ for any } p' \in U_p \textrm{ we have } U_p\subset W_{p'} \textrm{ of Lemma~\ref{small-zags-2}}.
%\ee
We will prove that for all $q\in U_p$ we have
\be
\hat{d}_\tau(p,q)=\tau(q)-\tau(p) \implies q \textrm{ is in  the causal future of } p.
\ee

Fix an arbitrary $\epsilon> 0$ and let $\beta:[0,1]\to U \subset M$ such that $\beta(0)=p$ and $\beta(1)=q$ be a piecewise causal curve
as in Lemma ~\ref{small-zags-2}.
By telescoping sums we have
\be
 \omega(q)-\omega(p)
=\sum_{i=0}^{k-1} (\omega(\beta(s_{2i+2})) - \omega(\beta(s_{2i+1})))+ \sum_{i=0}^{k} (\omega(\beta(s_{2i+1})) - \omega(\beta(s_{2i}))) .
\ee
The second sum is nonnegative by Lemma~\ref{omega-q'-q}, as $\beta(s_{2i+1})$ is in the causal future of $\beta(s_{2i})$ as long as $s_{2i}\neq s_{2i+1}$.
%\be
% (\omega(\beta(s_{2i+2})) - \omega(\beta(s_{2i+1})))\ge 0
%\ee
Since $\omega(p)=0$ we thereby have
\be
 \omega(q) \ge \sum_{i=0}^{k-1} (\omega(\beta(s_{2i+2})) - \omega(\beta(s_{2i+1}))) 
\ee
where $\beta$ is past causal on $[s_{2i+1},s_{2i+2}]$ if $s_{2i+1}\neq s_{2i+2}$.  Each term in the right hand side is nonpositive but controlled in the view of Lemma~\ref{omega-Lip} and \eqref{zigzag}: 
\begin{equation*}
\begin{split}
\omega(\beta(s_{2i+2})) - \omega(\beta(s_{2i+1})) & \ge - 2 d_{g_R}(\beta(s_{2i+2}), \beta(s_{2i+1})) \\ & \ge - 2 C_p d_U(\beta(s_{2i+2}), \beta(s_{2i+1})) \\ & \ge - 2C_p\hat{L}_\tau(\beta) |s_{2i+2}-s_{2i+1}|
\end{split}
\end{equation*}
where $C_p$ is a constant such that 
\be
C_p^{-1} d_{g_R}(q,z )\leq d_U (q,z)\leq C_p d_{g_R}(q,z) \quad \text{ for all } q,z \in U_p.
\ee  
Finally, applying \eqref{zig-s} we arrive at 
\be
 \omega(q) \ge - 2C_p \hat{L}_\tau(\beta) \sum_{i=0}^{k-1} |s_{2i+2}- s_{2i+1}| \ge - 2 C_p \epsilon. 
\ee
Since this is true for all $\epsilon>0$ we have $\omega(q)\ge 0$ and thus $q$ is in the causal future of $p$.

Finally, we describe  how the above argument needs to be modified to show that there is a neighborhood $W$ of $p$ such that for all $q\in W$ we have
\be
\hat{d}_\tau(p,q)=\tau(p)-\tau(q) \implies q \textrm{ is in  the causal past of } p.
\ee
We let $\eta$ be a timelike past unit speed geodesic through $p$,  $U_\eta$ be the domain of the past null chart and $\omega$ be the past optical function. We define $U_p$ as before, and note that in this case $p \in W_q$ for any $q\in U_p$.  As a consequence, for any $\epsilon> 0$ there is a piecewise causal curve $\beta:[0,1]\to U \subset N$ such that $\beta(0)=q$ and $\beta(1)=p$ with small past segments in the sense of Lemma ~\ref{small-zags-2}.  Equivalently,  there is a piecewise causal curve,  $\beta:[0,1]\to U \subset M$ such that $\beta(0)=p$ and $\beta(1)=q$ with small future segments.  We may now repeat the above argument using the past optical function in the place of future one to conclude that $q\in J_-(p)$. 
\end{proof}

\section{The Null Distance Encodes Causality  when $\tau$ is Proper}\label{sect:proper}

In this section we prove $\hat{d}_\tau$ globally encodes causality for spacetimes with $\tau: N \to (0,T)$ that is proper.   Recall that a function is proper if preimages of compact sets are compact.   So we are assuming that
\be
\tau^{-1}[T_1,T_2] \textrm{ is compact  whenever } 0<T_1\le T_2 <T
\ee
where $T=\sup_N \tau \in (0,\infty]$.   We also assume that $N$ is path connected.

\begin{thm} \label{proper-causality}
Let $(N,g)$ be a path connected spacetime, and let $\tau: N \to \mathbb{R}$ be a generalized proper time function locally satisfying the reverse Lipschitz condition of (\ref{eqCGM}).
Then $\dhat_\tau$ globally encodes causality in $N$, that is 
for all $p,q\in N$,
\be
\hat{d}_\tau(p,q)=\tau(q)-\tau(p) \implies q \in J^+ (p).
\ee
\end{thm}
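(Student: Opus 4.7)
The plan is to construct a dense family of ``$\hat{d}_\tau$-geodesic'' points joining $p$ to $q$ and then chain the local encoding result of Theorem~\ref{encodes-causality-locally} on a sufficiently fine partition. Without loss of generality, assume $\tau(q) > \tau(p)$ (the case $\tau(q)=\tau(p)$ forces $p=q$ by definiteness of $\hat{d}_\tau$). The first step is an intermediate-point lemma: if $\hat{d}_\tau(p,q) = \tau(q)-\tau(p)$ and $t \in [\tau(p), \tau(q)]$, then there exists $r \in N$ with $\tau(r)=t$, $\hat{d}_\tau(p,r) = t - \tau(p)$, and $\hat{d}_\tau(r,q) = \tau(q) - t$. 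To prove this I would pick piecewise causal curves $\beta_j$ with $\hat{L}_\tau(\beta_j) < \hat{d}_\tau(p,q) + 1/j$. By Lemma~\ref{small-zags-1}, the past-causal portions of $\beta_j$ contribute at most $1/j$ to the null length, so the total variation of $\tau\circ\beta_j$ is at most $\tau(q)-\tau(p)+1/j$ and hence $\beta_j$ is contained in the compact set $K := \tau^{-1}[\tau(p)-1, \tau(q)+1]$ by properness of $\tau$. The function $\tau$ is continuous in the manifold topology, since \eqref{dhat-time} yields $|\tau(x_n)-\tau(x)| \le \hat{d}_\tau(x_n,x)$ and by Sormani--Vega $\hat{d}_\tau$ induces the manifold topology. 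Thus by the intermediate value theorem I may pick $s_j$ with $\tau(\beta_j(s_j))=t$, and splitting $\beta_j$ at $s_j$ (which preserves total null length) gives $\hat{d}_\tau(p, \beta_j(s_j)) + \hat{d}_\tau(\beta_j(s_j), q) \le \tau(q)-\tau(p)+1/j$, while the basic inequality \eqref{dhat-time} gives the reverse sandwich. Compactness of $K$ supplies a subsequential limit $r$, and continuity of $\tau$ and of $\hat{d}_\tau$ in its own topology deliver the claimed equalities.

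Second, I would iterate this lemma through a dyadic bisection to produce a family $\{r_d\}_{d \in D \cap [0,1]}$ indexed by dyadic rationals, with $r_0 = p$, $r_1 = q$, $\tau(r_d) = (1-d)\tau(p)+d\tau(q)$, and $\hat{d}_\tau(r_d, r_{d'}) = (d'-d)(\tau(q)-\tau(p))$ for all $d < d'$. The equality at each stage comes from a dyadic chain of triangle inequalities combined again with $\hat{d}_\tau \geq |\Delta\tau|$. In particular every nearby pair $(r_d, r_{d'})$ satisfies the hypothesis $\hat{d}_\tau(\cdot,\cdot)=\Delta\tau$ of Theorem~\ref{encodes-causality-locally}.

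The final step is to extract a uniform local radius on $K$ and chain the implications. For each $x \in K$, Theorem~\ref{encodes-causality-locally} furnishes an $\hat{d}_\tau$-open neighborhood $W_x \supset B_{\hat{d}_\tau}(x,\rho_x)$ on which local causality encoding at $x$ holds. Tracing through the construction in Section~\ref{secLocalCausality}, the Temple chart centered at $x$ can be built from a smooth future-directed unit timelike vector field defined on a neighborhood of $K$ (available by time-orientability coming from the existence of a time function), so $\rho_x$ is lower semi-continuous; compactness of $K$ then yields $\rho := \inf_{x \in K} \rho_x > 0$. Choosing dyadic rationals $0 = d_0 < d_1 < \cdots < d_M = 1$ with $(d_{i+1}-d_i)(\tau(q)-\tau(p)) < \rho$, each pair $(r_{d_i}, r_{d_{i+1}})$ lies within $\rho$ in $\hat{d}_\tau$ and satisfies the hypothesis of the local encoding at $r_{d_i}$, forcing $r_{d_{i+1}} \in J^+(r_{d_i})$; concatenating along the chain gives $q \in J^+(p)$. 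I expect the extraction of the uniform positive local radius to be the main obstacle: it demands a careful reading of how the Temple chart, the optical function, and the auxiliary Riemannian metric of Section~\ref{secLocalCausality} depend on the base point, so that continuous dependence and hence a uniform lower bound on $\rho_x$ can be rigorously justified. Once this uniformity is in place, the remaining steps are metric-geometric bookkeeping using tools already developed within the paper.
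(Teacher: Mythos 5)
Your first two steps (the intermediate-point lemma via splitting almost-minimizers and the dyadic family with $\hat{d}_\tau(r_d,r_{d'})=(d'-d)(\tau(q)-\tau(p))$) are sound, apart from a small repair: $\tau^{-1}[\tau(p)-1,\tau(q)+1]$ need not be compact under the properness hypothesis as stated ($\tau$ maps into $(0,T)$ and only $\tau^{-1}[T_1,T_2]$ with $0<T_1\le T_2<T$ is assumed compact), so you should take $K=\tau^{-1}[T_1,T_2]$ with $T_1<\tau(p)\le\tau(q)<T_2$ chosen inside $(0,T)$, exactly as is done in the paper's Lemma~\ref{lemCpq}. That lemma plays the role of your first two steps, but gets more: an actual limit curve $C_{p,q}$ via Arzel\`a--Ascoli and lower semicontinuity of $L_{\hat{d}_\tau}$, along which the equality $\hat{d}_\tau(C(s),C(s'))=\tau(C(s'))-\tau(C(s))$ holds for every subsegment.

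The genuine gap is the step you yourself flag: the uniform radius $\rho=\inf_{x\in K}\rho_x>0$. Nothing in your proposal (or in Section~\ref{secLocalCausality}) establishes lower semicontinuity of an admissible $\rho_x$ in the base point: the radius produced there depends on the size of the Temple chart domain $U_\eta$ (where $\Phi$ is a homeomorphism), on the bound for $|\nabla_R\omega|_{g_R}$ after shrinking, and on the comparison constant $C_p$ between $d_{g_R}$ and the local anti-Lipschitz distance $d_U$ of (\ref{eqCGM}) --- and the metrics $d_U$ are only given locally, one neighborhood per point, with no uniform control over $K$ supplied by the hypothesis. Turning ``smooth dependence of the chart on the center'' into a positive uniform radius would require a separate, nontrivial parameter-dependence argument plus another compactness argument for the CGM data. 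Moreover this uniformity is avoidable: your map $d\mapsto r_d$ is $(\tau(q)-\tau(p))$-Lipschitz from the dyadics into the compact (hence complete) set $K$, so it extends to a continuous curve $C:[0,1]\to N$ with $\hat{d}_\tau(C(s),C(s'))=\tau(C(s'))-\tau(C(s))$ for all $s<s'$; then compactness of the image lets you cover it by finitely many of the local-causality neighborhoods centered at points of the curve, choose a partition subordinate to that cover, and chain the future and past implications of Theorem~\ref{encodes-causality-locally} at those centers. This is precisely how the paper's proof of Theorem~\ref{proper-causality} concludes from Lemma~\ref{lemCpq}, and it needs no uniform lower bound on the local radii; I would recommend closing your argument this way rather than attempting the uniformity claim.
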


We can also prove this theorem for more general classes of manifolds but the hypotheses become very technical so we will postpone these theorems for the future.

\begin{rmrk} \label{rmrk-missing-ray}
Recall that in Example~\ref{ex-missing-ray} we gave an example of a path connected spacetime, $(N,g)$,
with a generalized time function, $\tau$, locally satisfying the reverse Lipschitz condition that had a pair of points $p,q$
such that $\hat{d}_\tau(p,q)=\tau(q)-\tau(p)$ but $q$ was not in the causal future of $p$.   The key obstruction
in this example was that a ray was missing which lead to the nonexistence of the causal curve from $p$ to $q$.
In this example the level sets of $\tau$ were not compact, and in particular, the sequence of piecewise
causal curves $\beta_j$ from $p$ to $q$ such that $\hat{L}_\tau(\beta_j)\to \hat{d}_\tau(p,q)$ did not have a converging subsequence.
\end{rmrk}

\subsection{Finding $\hat{d}_\tau$ Minimizing Curves}

In this section we prove that under the appropriate hypotheses on $N$ and $\tau$, every pair of points $p,q\in N$ such that $\hat{d}_\tau (p,q) = \tau(q) - \tau(p)$ has
a curve $C_{p,q}$ whose $\hat{d}_\tau$ rectifiable length achieves $\hat{d}_\tau (p,q)$, see Lemma \ref{lemCpq} below.   This is not true in general as was seen in Example~\ref{ex-missing-ray}.

%In our proof of the fact that null distance encodes causality in cosmological spacetimes we will crucially rely on Lemma \ref{lemCpq} which, roughly speaking, says that for any $p,q\in M$ such that $\hat{d}_\tau (p,q) = \tau(q) - \tau(p)$ there is a curve $C_{p,q}$ from $p$ to $q$ which  in a certain sense 'realizes the null distance'  between any of its  two points. Before stating and proving this lemma, we include the following caveat for the reader. Given $p$ and $q$ as above, if there existed a piecewise causal curve $C_{p,q}$  such that $\hat{L}_\tau(C_{p,q})=\hat{d}_\tau (p,q)=\tau(q) - \tau(p)$, then (2) in Lemma \ref{Lhat} would imply that $C_{p,q}$ is future causal completing the proof of encoding causality. However, the curve $C_{p,q}$ to be constructed below need not be piecewise causal, so its null length $\hat{L}_\tau(C_{p,q})$ may not be defined. 

\begin{lem}\label{lemCpq}
Let $(N,g)$ be a spacetime with a generalized proper time function  $\tau$. Suppose that $p,q\in N$ are such that
\be\label{eqMainAssumption}
\dhat_\tau(p,q)=\tau(q)-\tau(p).
\ee
Then there exists a curve $C_{p,q}= C \subset N$ such that $C(0)=p$, $C(1)=q$ and for all $0\leq s < s' \leq 1$ we have
\be
\dhat_\tau(C(s),C(s'))=\tau(C(s'))-\tau(C(s)).
\ee
\end{lem}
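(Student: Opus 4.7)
The plan is to construct $C$ as a uniform limit of almost-minimizing piecewise causal curves extracted via Arzel\`a--Ascoli. For each $j \in \mathbb{N}$, apply Lemma~\ref{small-zags-1} with $\epsilon = 1/j$ to obtain a piecewise causal curve $\beta_j : [0,1] \to N$ from $p$ to $q$, parametrized proportional to $\hat{d}_\tau$-arc length, with null length $L_j := \hat{L}_\tau(\beta_j) < \hat{d}_\tau(p,q) + 1/j$ and total past-directed null length less than $1/j$. Because $\beta_j$ restricted to any subinterval $[s,s']$ is itself a piecewise causal curve joining $\beta_j(s)$ to $\beta_j(s')$ of null length $(s'-s) L_j$, we have the uniform Lipschitz bound
\be
\hat{d}_\tau(\beta_j(s), \beta_j(s')) \;\le\; (s'-s)\, L_j \quad \text{for all } 0 \le s < s' \le 1.
\ee
Decomposing $L_j$ as the sum of total ``up-travel'' $U_j$ and total ``down-travel'' $D_j$ of $\tau \circ \beta_j$, one has $U_j - D_j = \tau(q) - \tau(p)$ and $D_j < 1/j$, so $\tau \circ \beta_j$ takes values in $[\tau(p) - 1/j,\, \tau(q) + 1/j] \subset [\tau(p) - 1,\, \tau(q) + 1]$ for every $j \ge 1$.

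Properness of $\tau$ now enters: the set $K := \tau^{-1}[\tau(p) - 1,\, \tau(q) + 1]$ is compact in the manifold topology, and since the locally anti-Lipschitz condition guarantees by Sormani--Vega that $\hat{d}_\tau$ is a metric inducing that topology, $K$ is also $\hat{d}_\tau$-compact. The curves $\beta_j$ all take values in $K$ and are uniformly Lipschitz into $(K, \hat{d}_\tau)$ with constant $L_j \le \tau(q) - \tau(p) + 1$. Arzel\`a--Ascoli therefore yields a subsequence $\beta_{j_k}$ converging uniformly in $\hat{d}_\tau$ to a continuous curve $C : [0,1] \to K$ with $C(0) = p$ and $C(1) = q$. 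Passing to the limit in the Lipschitz bound, using continuity of $\hat{d}_\tau$ and $L_{j_k} \to \tau(q) - \tau(p)$, gives
\be
\hat{d}_\tau(C(s), C(s')) \;\le\; (s' - s)(\tau(q) - \tau(p)) \quad \text{for all } 0 \le s < s' \le 1.
\ee

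The rigidity step concludes the proof. Summing this bound over $[0, s]$, $[s, s']$, $[s', 1]$ gives
\be
\hat{d}_\tau(p, C(s)) + \hat{d}_\tau(C(s), C(s')) + \hat{d}_\tau(C(s'), q) \;\le\; \tau(q) - \tau(p) \;=\; \hat{d}_\tau(p,q),
\ee
while the triangle inequality gives the reverse inequality, so equality holds throughout. Each summand is bounded below by the corresponding time-difference via (\ref{dhat-time}), and since these three lower bounds already sum to $\tau(q) - \tau(p)$, each individual bound must be attained. In particular $\hat{d}_\tau(C(s), C(s')) = \tau(C(s')) - \tau(C(s))$, which is the desired claim.

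The delicate step is the compactness. Without properness of $\tau$, the almost-minimizing curves can escape to infinity or accumulate against a missing boundary, as in Example~\ref{ex-missing-ray}, so no limit curve $C$ need exist and the rigidity above cannot even be attempted. Once a uniform Lipschitz bound into a common compact set is secured, the remainder is a clean triangle-inequality rigidity driven by $\hat{d}_\tau \ge |\Delta \tau|$.
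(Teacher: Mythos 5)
Your proof is correct, and it follows the same overall skeleton as the paper's: take almost-minimizing piecewise causal curves, confine them to a compact set using properness of $\tau$, parametrize proportionally to $\hat{d}_\tau$-arclength to obtain a uniform Lipschitz bound, extract a limit curve by Arzel\`a--Ascoli, and finish with triangle-inequality rigidity. Two of your sub-arguments differ from the paper's, and the comparison is worth recording. First, to trap the $\beta_j$ in a sublevel-superlevel set of $\tau$, the paper simply quotes the oscillation bound $\hat{L}_\tau(\beta_j)\ge \max_{\beta_j}\tau-\min_{\beta_j}\tau$ (Lemma 3.6 of Sormani--Vega), whereas you route through Lemma~\ref{small-zags-1} and the bookkeeping $U_j-D_j=\tau(q)-\tau(p)$, $D_j<1/j$; both yield the same containment, and the small-zag control is not actually needed for this lemma, so the paper's confinement step is leaner. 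Second, after Arzel\`a--Ascoli the paper invokes lower semicontinuity of the rectifiable length $L_{\hat{d}_\tau}$ to get $L_{\hat{d}_\tau}(C)=\hat{d}_\tau(p,q)$ and then derives the pointwise identity by a contradiction over a subinterval, while you instead pass to the limit in the pointwise estimate $\hat{d}_\tau(\beta_j(s),\beta_j(s'))\le (s'-s)L_j$ and split the resulting three-term equality using $\hat{d}_\tau\ge|\Delta\tau|$; your variant is a bit more self-contained, since it avoids the semicontinuity-of-length citation, at the cost of not recording that $C$ realizes $\hat{d}_\tau(p,q)$ as a length. One small caution: you justify $\hat{d}_\tau$-compactness of $K$ by invoking the locally anti-Lipschitz hypothesis, which is not among the hypotheses of Lemma~\ref{lemCpq} as stated (only a generalized proper time function is assumed); it is available in Theorem~\ref{proper-causality}, where the lemma is applied, and the paper's own use of Arzel\`a--Ascoli rests on the same compatibility between the manifold topology and $\hat{d}_\tau$, so this is a shared implicit assumption rather than a defect of your argument.
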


\begin{proof}  
By the definition of null distance, for any $j>0$, there exists a piecewise causal curve $\beta_j \subset N$
from $p$ to $q$ such that 
\be\label{eqBdLengthes}
\dhat_\tau (p,q) \leq L_j=\hat{L}_\tau(\beta_j) < \dhat_\tau(p,q)+1/j.
\ee
Recall that by Lemma 3.6 \cite{SV-Null} we have 
\be\label{eqRecall}
\hat{L}_\tau (\beta_j) \ge \max_{\beta_j} \tau  - \min_{\beta_j} \tau.
\ee
As a consequence of \eqref{eqBdLengthes}, \eqref{eqRecall} and \eqref{eqMainAssumption} we have
\begin{eqnarray*}
\tau(q)-\tau(p) + 1/j & \ge & \max_{\beta_j} \tau  - \tau(p), \\
 \tau(q)-\tau(p) + 1/j  & \ge &  \tau(q)  - \min_{\beta_j} \tau.
\end{eqnarray*}
It follows that 
\be
\tau(p) - 1/j  \leq \min_{\beta_j} \tau <  \max_{\beta_j} \tau \leq \tau(q)+ 1/j .
\ee
Since $\tau(p),\tau(q) \in (0,T)$, by taking $j$ to be sufficiently large we see that all 
$\beta_j$ are contained in a compact set $\tau^{-1}([T_1,T_2])$ for some $0<T_1 \leq T_2 <T$.

Since $\beta_j$ is piecewise causal, by Lemma~\ref{lengths-agree} we can parametrize each $\beta_j$ 
proportional to 
$\hat{d}_\tau$-rectifiable length. In this case, we have $\beta_j:[0,1]\to N$  and 
 for any $s,s' \in [0,1]$ we have 
\be\label{eqApproxLength}
{L}_{\hat{d}_\tau} (\beta_j|_{[s,s']})= L_j |s-s'| \geq |\tau (\beta_j(s)) - \tau( \beta_j(s'))|.
\ee

Since $L_j\to L= \dhat_\tau(p,q)$ and since all the images of $\beta_j$ are contained in the compact set, $\tau^{-1}([T_1,T_2])$, we can apply the Arzela-Ascolli theorem (cf. \cite[Theorem 2.5.14]{BBI})  
to see that there is a subsequence of $\{\beta_j\}$, denoted by the 
same notation, that $C^0$ converges to a curve $C:[0,1] \to N$ that is contained in $\tau^{-1}([T_1,T_2])$.
By lower semicontinuity of $L_{\dhat_ \tau}$ (cf. \cite[Proposition 2.3.4]{BBI}), we have
\be\label{lower-semi-length}
L_{\dhat_ \tau}(C) \leq \liminf_{j\to \infty}L_{\dhat_ \tau}(\beta_j)  =  \liminf_{j\to \infty}\hat{L}_\tau (\beta_j)= \dhat_ \tau (p,q).
\ee
On the other hand, by the triangle inequality and definition of rectifiable length, 
\be
L_{\dhat_ \tau}(C) \geq \dhat_ \tau (p,q). 
\ee
Combining this with our hypothesis we have
\be\label{full-length}
L_{\dhat_ \tau}(C) = \dhat_ \tau (p,q)=\tau(q)-\tau(p).
\ee 

Note also that for any $0\le s<s'\le 1$ we have
\be\label{part-length}
L_{\dhat_ \tau}(C[s,s']) \ge \dhat_ \tau (C(s),C(s')) \ge |\tau(C(s)-\tau(C(s'))| \ge \tau(C(s)-\tau(C(s'))
\ee
by the definition of rectifiable length, triangle inequality, and (\ref{dhat-time}).

We now turn to the proof of the last claim. Assume on the contrary that there is a segment $[s,s'] \subseteq [0,1]$ such that
\be
 \dhat_ \tau (C(s), C(s')) > \tau (C(s)) - \tau(C(s')).
\ee 
Applying (\ref{part-length}) to $[0,s]$, $[s,s']$, and $[s',1]$ we would have
\begin{eqnarray*}
L_{\dhat_ \tau}(C[0,1]) &=&L_{\dhat_ \tau}(C[0,s]) +L_{\dhat_ \tau}(C[s,s']) +L_{\dhat_ \tau}(C[s',1]) \\
&\ge& \dhat_ \tau (C(0),C(s))+\dhat_ \tau (C(s),C(s'))+\dhat_ \tau (C(s'),C(1))\\
& >&  (\tau (C(0)) - \tau(C(s)))+ (\tau (C(s)) - \tau(C(s')))+ (\tau (C(s')) - \tau(C(1))) \\
&=&  \tau (C(0)) - \tau(C(1)).
\end{eqnarray*}
which contradicts (\ref{full-length}).
\end{proof}

As seen from this proof, the curve $C_{p,q}$ such that its $\dhat_\tau$-rectifiable length achieves $\dhat_\tau (p,q)$ will exist if we replace \eqref{eqMainAssumption} and the hypothesis of properness of $\tau$ by any other  assumption that ensures that almost $\dhat_\tau$-minimizing curves $\beta_j$ are contained in a compact set.  Note that in general such a curve $C_{p,q}$ need not be piecewise causal even though it is a limit of piecewise causal curves $\beta_j$ as we see in the following example:

\begin{example} \label{ex-wise}

Consider for instance $p=(0,0,0,0)$ and $q=(0,1,0,0)$ in Minkowski space with $\tau=t$.   Let $\beta_j$ be piecewise causal curves running from $C(0)=p=p_0$ to $C(1)=q=p_{2j}$
via the points
\be
p_{2i}=(0, i/j,0,0) \textrm{ and } p_{2i+1}=(1/(2j) + (1/j^2) , i/j+1/(2j),0,0)
\ee
with 
\begin{eqnarray}
\hat{L}_\tau(\beta_j)&=& \sum_{i=1}^{j}\left( |\tau(p_{2i-1})-\tau(p_{2i-2})|+|\tau(p_{2i})-\tau(p_{2i-1})| \right) \\
&=&  \sum_{i=1}^{j} 2(1/(2j) + (1/j^2)) =j (2(1/(2j) + (1/j^2))) = 1+ 2/j .
\end{eqnarray}
As seen in the Minkowski space example in \cite{SV-Null},  $\hat{d}_\tau(p,q)=1$, so as
 $j \to \infty$, we see
 \be
 \hat{L}_\tau(\beta_j) \to  \hat{d}_\tau(p,q).
 \ee
It is easy to see that these $\beta_j$  converge in the $C^0$ sense to 
\be
C_{p,q}(s)=(0,s,0,0)
\ee
which is spacelike from $p$ to $q$ and not piecewise null.   
\end{example}

%in 07-09 there was an example with a big bang we can move to our big bang paper.

\subsection{Local-to-Global with a Proper Time Function}

We can now prove Theorem~\ref{proper-causality}.  

\begin{proof} 
Let $p,q\in N$ be such that
\be
\hat{d}_\tau(p,q)=\tau(q)-\tau(p).
\ee
In this case, Lemma~\ref{lemCpq} implies that
there is a curve $C=C_{p,q} : [0,1]\to N$ such that $C(0)=p$, $C(1)=q$, and for all
$0\leq s < s' \leq L$ we have 
\be
L_{\hat{d}_\tau}(C|_{[s,s']})=\dhat_\tau(C(s),C(s'))=\tau(C(s'))-\tau(C(s)).  
\ee

For each point $C(t)$,  $t\in [0,L]$, we have an open neighborhood $U_{C(t)}$ where $\hat{d}_\tau$ locally encodes causality.   Since $C([0,L])$ is compact,  %by continuity of C
we have finitely many $U_i=U_{C(s_i)}$ required to cover it.  Taking
\be
0=t_0<s_1<t_1<\cdots<s_N< t_N=L
\ee
so that 
$
C(t_i), C(s_{i+1}), C(t_{i+1})  \in U_i, 
$
for all $i=0, \ldots, N-1$ we have
\be
\hat{d}_\tau(C(s_{i+1}),C(t_i)) = \tau(C(s_{i+1}))-\tau(C(t_i))
\ee
and for all $i=1, \ldots, N$ we have
\be
\hat{d}_\tau(C(s_{i}),C(t_i)) = \tau(C(t_{i}))-\tau(C(s_i)).
\ee
By local causality on $U_i$,  for all $i=0, \ldots, N-1$ we have
\be
C(s_{i+1}) \textrm{ is in the causal future of } C(t_i),
\ee
and and for all $i=1, \ldots, N$ we have
\be
C(t_{i}) \textrm{ is in the causal future of } C(s_i).
\ee
Thus $q=C(L)$ is in the causal future of $p=C(0)$.
\end{proof}

\section{The Isometry Theorem} \label{secIsom}

In this section we will prove Theorem~\ref{Lorentzian-isom}.   
This theorem concerns a pair of spacetimes, $(N_1,g_1)$ and $(N_2,g_2)$, equipped with regular  cosmological time functions, $\tau_1$ and $\tau_2$, and a bijection $F: N_1 \to N_2$ that preserves null distances,
\be \label{hat-here}
\hat{d}_{\tau_1} (p,q) = \hat{d}_{\tau_2} (F(p), F(q)) \quad  \text{for any} \quad p, q \in N_1,
\ee
and cosmological times,
\be \label{tau-here}
\tau_1 = \tau_2 \circ F.
\ee
We assume that the cosmological time functions $\tau_i$ are proper, so that the causality is encoded by the associated null distances $\hat{d}_{\tau_i}$ by Theorem \ref{proper-causality}.

As explained in the introduction, if $F$ was known to be a diffeomorphism, then the fact that it preserves the causal structure would immediately imply that it is a conformal isometry.  
However we do not know that $F$ is even differentiable.

To prove that $F$ is a conformal isometry, we will apply the following theorem of Levichev \cite{Levichev-1987}, which builds upon a lemma in Hawking's 1966 Adams Prize Essay (that appears as Lemma 19 in the reprint \cite{Hawking-2014}) which in turn builds upon work of Zeeman \cite{Zeeman-64}.
See also the paper by Hawking-King-McCarthy \cite{Hawking-King-McCarthy-1976} and the work of Malament \cite{Malament-1977}). 
See Minguzzi's recent survey \cite{Minguzzi-2019} for an overview of these results.   

\begin{thm}\label{thLevichev} {\em [Levichev]}
Let $(N_1,g_1)$ and $(N_2,g_2)$ be two $n+1$-dimensional distinguishing spacetimes, $n\geq 2$,   and let $F: N_1 \to N_2$ be a causal bijection, i.e. a bijection such that
\be
q\in J_+(p) \iff F(q) \in J_+ (F(q)).
\ee
 Then $F$ is a smooth conformal isometry, i.e. there exists a smooth function $\phi>0$ such that $F^* g_2 = \phi^2 g_1$.
\end{thm}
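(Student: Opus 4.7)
The plan is to follow the classical strategy of Zeeman, Hawking-King-McCarthy, Malament, and Levichev, which reduces the theorem to three successive rigidity statements. The first two stages promote the set-theoretic causal bijection $F$ to a homeomorphism, and the third (the hard one) bootstraps a homeomorphism all the way to a smooth conformal isometry. Throughout I use the dimensional hypothesis $n \geq 2$ only in the final stage; the first two work in all dimensions.

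\emph{Stage 1: from causal bijection to chronological bijection.} I would show $F(I^+(p)) = I^+(F(p))$ for every $p \in N_1$ by giving an order-theoretic characterization of $\ll$ purely in terms of $\leq$ valid in distinguishing spacetimes. Concretely, $p \ll q$ iff there exist $a, b$ with $p \leq a \leq q$ and $p \leq b \leq q$ that are mutually causally unrelated (neither $a \leq b$ nor $b \leq a$). The nontrivial direction uses that if $q \in J^+(p) \setminus I^+(p)$ then every causal curve from $p$ to $q$ must be an unbroken null geodesic, so intermediate causal points fall into a totally ordered family and no two unrelated intermediates exist. Since $F$ preserves $\leq$ bijectively, it then preserves $\ll$ bijectively.

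\emph{Stage 2: chronological bijection implies homeomorphism (Malament).} The chronological diamonds $I^+(p)\cap I^-(q)$ form a basis for the Alexandrov topology, and in a distinguishing spacetime this topology agrees with the manifold topology. Since $F$ sends this basis bijectively to the analogous basis in $N_2$, it is a homeomorphism.

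\emph{Stage 3: homeomorphism preserving causal structure is a smooth conformal isometry.} Using Stages 1--2, one has a homeomorphism $F$ such that $q$ lies on a null pregeodesic from $p$ iff $q \in (J^+(p)\setminus I^+(p)) \cup (J^-(p)\setminus I^-(p))$, so $F$ carries null pregeodesics bijectively to null pregeodesics. Fixing $p$ and passing to exponential charts at $p$ and $F(p)$, $F$ pointwise preserves the null cone in each tangent space. By the Hawking--King--McCarthy theorem (the curved-spacetime extension of Zeeman's rigidity), such a homeomorphism is automatically a smooth diffeomorphism with $F^*g_2 = \phi^2 g_1$ for some smooth $\phi > 0$.

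\emph{Main obstacle.} Stage~3 is by far the hardest, and it is precisely where $n \geq 2$ enters essentially: in $1+1$ dimensions the null cone is only two lines and there is a huge moduli of non-smooth causal homeomorphisms, so Zeeman-type rigidity fails. In dimension $\geq 3$ the argument proceeds by first recovering the projective parametrization of null geodesics from the causal order (via cross-ratios of four collinear null-related points), which gives an $F$-invariant projective structure on each null geodesic; this forces $F$ to be smooth along null geodesics. A Liouville-type rigidity argument on the induced map of the fiber of null directions then upgrades this to smoothness of $F$ itself, and matching null cones pointwise yields the conformal factor $\phi$.
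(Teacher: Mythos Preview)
The paper does not prove this theorem at all: it is stated as a citation to Levichev \cite{Levichev-1987} (building on Hawking \cite{Hawking-2014} and Zeeman \cite{Zeeman-64}) and is used as a black box in the proof of Theorem~\ref{Lorentzian-isom}. There is therefore no ``paper's own proof'' to compare your attempt against.

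That said, your sketch is a reasonable outline of the classical argument, with one technical slip worth flagging. In Stage~2 you assert that in a distinguishing spacetime the Alexandrov topology (generated by chronological diamonds $I^+(p)\cap I^-(q)$) agrees with the manifold topology. This equivalence is the characterization of \emph{strongly causal} spacetimes, not merely distinguishing ones. Malament's argument for distinguishing spacetimes proceeds differently: one uses that $p\mapsto I^\pm(p)$ is injective to show that a chronology-preserving bijection is continuous, without invoking the Alexandrov basis directly. Your Stages~1 and~3 are accurate summaries of the strategy, and you correctly identify that the dimensional restriction $n\ge 2$ enters only in Stage~3 via the Zeeman--Hawking rigidity of null-cone-preserving maps.
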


Anderson, Galloway and Howard \cite{AGH} proved that a regular cosmological time function is continuous, which implies that the spacetimes $(N_i,g_i)$ are distinguishing, see e.g. \cite[Theorem 4.5.8 (v')]{Minguzzi-2019}.   Thus Levichev's Theorem can be applied on manifolds with regular cosmological time functions.  In particular we can apply it in our proof of  Theorem~\ref{Lorentzian-isom}:

\begin{proof}[Proof of Theorem~\ref{Lorentzian-isom}]

The assumptions of the theorem together with Theorem \ref{proper-causality} imply that $F: N_1\to N_2$  is a causal bijection.  Since our spacetimes have regular cosmological time functions and are thus distinguishing, we may apply Theorem \ref{thLevichev}, to conclude that $F$ is a conformal isometry: there exists a  smooth positive function $\phi: N_1 \to \mathbb{R}$ so that 
\be \label{conf-eq}
F^* g_2 = \phi^2 \, g_1 \textrm{ on }N_1.
\ee

We will show that $\phi \equiv 1$, so that $F: N_1 \to N_2$ is an isometry. Assume without loss of generality that there is $q \in N_1$ such that that $\phi(q) > 1$ (the case  $\phi(q) < 1$ is treated similarly). Let $U \subseteq N_1$ be a precompact neighborhood of $q$ such that $\phi>1$ in $U$ and let $V$ be the closure of $U$. We will compute the volume of $V$ in two different ways to reach a contradiction. On the one hand, we have
\be\label{133}
\int_{F(V)} \, d\mu^{g_2} = \int_{V} \, d\mu^{F^* g_2} =\int_{V} \, d\mu^{\phi^2 \, g_1} = \int_{V} \phi^n \, d\mu^{g_1}.
\ee 
Suppose that $\tau_1(V)=[\tau_{\min},\tau_{\max}]$, then we also have $\tau_2(F(V))=[\tau_{\min},\tau_{\max}]$. Since the cosmological time functions $\tau_i$, $i=1,2$, are Lipschitz with $|\nabla \tau_i|_{g_i} = 1$ almost everywhere, we can  compute:
\begin{equation*}
\begin{split}
\int_{F(V)} \, d\mu^{g_2} &= \int_{\tau_{\min}}^{\tau_{\max}} \left(\int_{\tau_2^{-1}(\tau)} d\mathcal{H}_{n-1}^{g_2}\right) d\tau \quad \qquad \textrm{ by the coarea formula }\\
& = \int_{\tau_{\min}}^{\tau_{\max}} \left(\int_{F^{-1}(\tau_2^{-1}(\tau))} d\mathcal{H}_{n-1}^{F^* g_2}\right) d\tau 
\quad  \textrm{ by change of variables,}\\
& = \int_{\tau_{\min}}^{\tau_{\max}} \left(\int_{\tau_1^{-1}(\tau)} d\mathcal{H}_{n-1}^{F^* g_2}\right) d\tau \quad \quad\,\,\,
\textrm{ by $\tau_2\circ F=\tau_1$,}
\\ & = \int_{\tau_{\min}}^{\tau_{\max}} \left(\int_{\tau_1^{-1}(\tau)} \phi^{n-1} d\mathcal{H}_{n-1}^{g_1}\right) d\tau 
\quad \textrm{ by (\ref{conf-eq}) applied in dimension $(n-1)$,}
\\
& = \int_{V}  \phi^{n-1} \, d\mu^{g_1} \qquad \qquad \qquad \,\,\,\quad \textrm{ by the coarea formula again.}
\end{split}
\end{equation*}
If we subtract this from (\ref{133}) we have
\be
\int_{V} \phi^{n-1} (\phi-1) \, d\mu^{g_1} =\int_{V} \phi^{n} \, d\mu^{g_1}-\int_{V} \phi^{n-1} \, d\mu^{g_1}= 0,
\ee
which contradicts $\phi > 1$ in $V$. Consequently, $\phi \equiv 1$ and  $F^* g_2 =  g_1$ on $N$. 
%We see that $F$ is a local isometry, but since it is also a bijection it is in fact a global isometry, which completes the proof.
\end{proof} 

Note that in the above proof we strongly use that $\tau_i$ are cosmological time functions.  

\begin{example} \label{only-conformal}
Let $(N_1,g_1)$ be any Lorentzian manifold with any time function $\tau_1$.   If we let
$N_2=N_1$ and $g_1=\phi^2 g_2$ and $\tau_1=\tau_2$ then
\be
\hat{d}_{\tau_1}(p,q)=\hat{d}_{\tau_2}(p,q)
\ee
because the same curves, $\beta$, are piecewise causal with respect to both $g_1$ and $g_2$
and 
\be
\hat{L}_{\tau_1}(\beta) = \sum_{i=1}^k |\tau_1(x_i) - \tau_1(x_{i-1})|
=
 \sum_{i=1}^k |\tau_2(x_i) - \tau_2(x_{i-1})|=\hat{L}_{\tau_2}(\beta).
\ee
Thus the identity map $F: N_1\to N_2$ is an isometry between $(N_1, \hat{d}_{\tau_1})$
and $(N_2, \hat{d}_{\tau_2})$ which preserves the time functions, $\tau_1=\tau_2\circ F$,
but $F$ is not a Lorentzian isometry.
\end{example}

\begin{rmrk}
Note the dimensional restriction in Theorem~\ref{Lorentzian-isom} is a consequence of the respective restriction in Levichev's Theorem \ref{thLevichev} \cite{Levichev-1987} and Hawking's 1966 work that Levichev builds upon.   In Hawking's proof (cf. \cite{Hawking-2014}), the dimension condition is essential for proving the differentiability of $F$.   We do not know of a non-example demonstrating that their dimension condition is necessary to prove that $F$ is differentiable.
Nor do we know of a non-example demonstrating that the dimension condition is necessary to prove our isometry theorem. 
\end{rmrk}

\begin{rmrk} \label{extend-to-low-regularity-spaces}
It should also be noted that we have deliberately proven our theorem using the coarea formula so that we may then imitate this proof on a lower regularity space like the integral current spaces of Sormani-Wenger \cite{SW-JDG} in the future.   Levichev's Theorem is very much proven in the style of Alexandrov geometry and should extend easily.   Hawking's work is harder to dissect. More recent work studying Lorentz spaces of lower regularity might be applied to extend the work of Levichev and Hawking to these settings and thus extend our isometry theorem.   See work of Harris \cite{Harris-82}, Alexander and Bishop \cite{Alexander-Bishop-08},
Chru\'sciel and Grant \cite{Chrusciel-Grant-12}, Burtscher \cite{Burtscher-12},
 Kunzinger and Sämann \cite{Kunzinger-Saemann-18},  Alexander, Graf, Kunzinger and Sämann \cite{AGKS}, Graf and Ling \cite{graf2018maximizers},  Cavalletti-Mondino \cite{Cavalletti-Mondino}, McCann-S{\"a}mann \cite{mccann2021lorentzian}, Ak\'e Hau, Cabrera Pacheco and Solis \cite{AkeHau-etal},
and Burtscher and Garc{\'i}a-Heveling \cite{Burtscher-Garcia-Heveling}.  
\end{rmrk}

\bibliographystyle{plain}
\bibliography{SF-bib.bib}

\end{document}